\documentclass[reqno]{amsart}
\usepackage{amsmath,latexsym,amssymb}
\usepackage[mathscr]{eucal}
\usepackage{enumitem}



\theoremstyle{plain}
\newtheorem{theorem}{Theorem}[section]
\newtheorem{lemma}[theorem]{Lemma}
\newtheorem{corollary}[theorem]{Corollary}
\newtheorem{example}[theorem]{Example}

\newtheorem{claim}{Claim}[theorem]

\theoremstyle{definition}
\newtheorem{definition}[theorem]{Definition}
\newtheorem{remark}[theorem]{Remark}
\newtheorem{note}[theorem]{Note}
\newtheorem{assumptions}[theorem]{Assumptions}
\newtheorem{chatexample}[theorem]{Example}

\newcommand{\Case}[1]{\smallskip\noindent{\emph{#1}}}

\newenvironment{proofofclaim}[1]%
 {\begin{proof}[Proof of Claim~#1]}%
 {\end{proof}}


\usepackage{tikz,pgf}
\usetikzlibrary{arrows,
calc,shapes,backgrounds,fit}

\tikzset{%
 element/.style={draw,circle,fill=white,inner sep=1.5pt},
 shaded element/.style={draw,circle,fill=black!30,inner sep=1.5pt},
 potato/.style={rounded rectangle,draw,densely dotted,inner sep=1.5pt},
 operation/.style={semithick,densely dotted,shorten >=3pt,shorten <=3pt,>=latex},
 loopy operation/.style={semithick,densely dotted,shorten >=3pt,shorten <=3pt,>=latex, min distance=18pt},
 graph/.style={thin,shorten >=3pt,shorten <=3pt},
 loopy graph/.style={thin,shorten >=3pt,shorten <=3pt, min distance=18pt},
 move up/.style= {transform canvas={yshift=2pt}},
 move down/.style={transform canvas={yshift=-2pt}},
 move far down/.style={transform canvas={yshift=-4pt}},
 move left/.style= {transform canvas={xshift=-2pt}},
 move right/.style={transform canvas={xshift=2pt}},
 auto}


\DeclareMathOperator{\Image}{Im}
\DeclareMathOperator{\Sub}{Sub}
\DeclareMathOperator{\Var}{Var}
\DeclareMathOperator{\Up}{Up}
\DeclareMathOperator{\Con}{Con}

\DeclareMathOperator{\Equiv}{Equiv}
\DeclareMathOperator{\Max}{Max}
\DeclareMathOperator{\Min}{Min}
\DeclareMathOperator{\Cyc}{Cyc}
\DeclareMathOperator{\DM}{DM}

\newcommand{\fin}{_{\mathrm{fin}}}
\newcommand{\id}{\mathrm{id}}

\newcommand{\dotcup}{\mathbin{\dot\cup}}
\newcommand{\dnarrow}{{\downarrow}}
\newcommand{\uarrow}{{\uparrow}}

\newcommand{\restrictedto}[1]{{\upharpoonright_{#1}}}

\newcommand{\A}{{\mathbf A}}
\newcommand{\B}{{\mathbf B}}
\newcommand{\C}{{\mathbf C}}
\newcommand{\D}{{\mathbf D}}
\newcommand{\F}{{\mathbf F}}
\newcommand{\G}{{\mathbf G}}
\newcommand{\Hb}{{\mathbf H}}
\newcommand{\K}{{\mathbf K}}
\newcommand{\Lb}{{\mathbf L}}
\newcommand{\M}{{\mathbf M}}
\newcommand{\N}{{\mathbf N}}
\newcommand{\Pb}{{\mathbf P}}
\newcommand{\Q}{{\mathbf Q}}
\newcommand{\Sb}{{\mathbf S}}
\newcommand{\U}{{\mathbf U}}
\newcommand{\V}{{\mathbf V}}
\newcommand{\1}{{\mathbf 1}}
\newcommand{\2}{{\mathbf 2}}

\newcommand{\cat}[1]{{\mathscr #1}}
\newcommand{\Lhom}{{\mathcal L_{\mathrm{hom}}}}
\newcommand{\upset}{{\mathcal U}}
\newcommand{\downsets}[1]{\mathcal O(#1)}
\newcommand{\properdownsets}[1]{\mathcal O^+(#1)}
\newcommand{\notop}{\setminus\{\top\}}

\newcommand{\pmods}[1]{\mkern8mu({\operatorname{mod}}\mkern 6mu#1)}

\renewcommand{\phi}{\varphi}
\renewcommand{\ge}{\geqslant}
\renewcommand{\le}{\leqslant}
\renewcommand{\emptyset}{\varnothing}


\begin{document}

\title[The homomorphism lattice induced by a finite algebra]%
  {The homomorphism lattice\\ induced by a finite algebra}

\author{Brian A. Davey}
  \address[Brian A. Davey]{Department of Mathematics and Statistics\\La Trobe University\\
    Victoria 3086, Australia}
  \email{B.Davey@latrobe.edu.au}

\author{Charles T. Gray}
  \address[Charles T. Gray]{Department of Mathematics and Statistics\\La Trobe University\\
    Victoria 3086, Australia}
  \email{C.Gray@latrobe.edu.au}

\author{Jane G. Pitkethly}
  \address[Jane G. Pitkethly]{Department of Mathematics and Statistics\\La Trobe University\\
    Victoria 3086, Australia}
  \email{J.Pitkethly@latrobe.edu.au}

\subjclass[2010]{%
  Primary: 08B25;   
  Secondary: 06B15, 
             08A40} 

\keywords{homomorphism order, finitely generated variety, quasi-primal algebra,
distributive lattice, covering forest}

\begin{abstract}
Each finite algebra $\A$ induces a lattice~$\Lb_\A$ via
the quasi-order~$\to$ on the finite members of the variety generated by~$\A$,
where $\B \to \C$ if there exists a homomorphism from $\B$ to~$\C$.
In this paper, we introduce the question: `Which lattices arise as the
homomorphism lattice $\Lb_\A$ induced by a finite algebra $\A$?'
Our main result is that each finite distributive lattice arises as~$\Lb_\Q$,
for some quasi-primal algebra~$\Q$. We also obtain representations of some
other classes of lattices as homomorphism lattices, including
all finite partition lattices, all finite subspace lattices and all lattices
of the form $\Lb\oplus \1$, where $\Lb$ is an interval in the subgroup
lattice of a finite group.
\end{abstract}

\maketitle


For any category~$\cat C$, there is a natural quasi-order $\to$ on the objects of~$\cat C$,
given by $\A \to \B$ if there exists a morphism from $\A$ to~$\B$.
The associated equivalence relation on~$\cat C$ is given by
\[
\A \equiv \B \iff \A \to \B \text{ and } \B \to \A.
\]
Thus $\to$ induces an order on the class $\cat C/{\equiv}$.
One way to ensure that $\cat C/{\equiv}$ is a set is to take the category $\cat C$
to be a class of finite structures with all homomorphisms between them.
In this case, we can define the ordered set
\[
 \Pb_{\cat C} := \langle \cat C/{\equiv}; \to \rangle,
\]
which we refer to as the \emph{homomorphism order} on~$\cat C$.
If $\cat C$ has pairwise products and coproducts, then $\Pb_{\cat C}$  is a lattice:
the meet and join of $\A/{\equiv}$ and $\B/{\equiv}$ are $(\A\times \B)/{\equiv}$
and~$(\A\sqcup \B)/{\equiv}$, respectively; see Figure~\ref{fig: graphs}.

\begin{figure}[t]
\begin{tikzpicture}
\node (prod) at (0,0) {$\A \times \B$};
\node (G) at (-1,1) {$\A$};
\node (H) at (1,1) {$\B$};
\node (coprod) at (0,2) {$\A \sqcup \B$};
\draw[->] (prod) to (G);
\draw[->] (prod) to (H);
\draw[->] (G) to (coprod);
\draw[->] (H) to (coprod);
\end{tikzpicture}
\caption{Meet and join in $\Pb_{\cat C} $ when $\cat C$ has pairwise products and coproducts.}\label{fig: graphs}
\end{figure}

The homomorphism order has been studied extensively for the category $\cat G$
of finite directed graphs; see Hell and Ne\v{s}et\v{r}il~\cite{HN}.
The ordered set $\Pb_{\cat G}$,
which forms a bounded distributive lattice, is very complicated:
every countable ordered set embeds into~$\Pb_{\cat G}$~\cite{H69,PT,HuN}.
More generally, the homomorphism order has been studied for various categories
of finite relational structures~\cite{NT,NPT,FNT,KL}.

In this paper, we introduce the study of the homomorphism order for categories
of the form~$\cat V\fin$, consisting of the finite members of a variety $\cat V$ of algebras.
To ensure that the homomorphism order forms a lattice, we shall restrict our
attention to locally finite varieties and, more particularly, to finitely generated varieties.

Given a finite algebra $\A$, we can define the lattice
\[
 \Lb_\A := \langle \Var(\A)\fin/{\equiv}; \to \rangle,
\]
which we refer to as the \emph{homomorphism lattice} induced by~$\A$.
We shall see that such a lattice $\Lb_\A$ may be just as complicated as the
homomorphism order $\Pb_{\cat G}$ for finite directed graphs.
For example, there is a five-element unary algebra $\U$ such that $\Pb_{\cat G}$
order-embeds into~$\Lb_{\U}$; see Example~\ref{ex: inf}.

We are interested in the question:
\begin{quote}
Which lattices arise as $\Lb_\A$, for some finite algebra~$\A$?
\end{quote}
Our main result (proved over Sections~\ref{sec: Q to L}--\ref{sec: L to Q})
is that each finite distributive lattice arises as the homomorphism lattice~$\Lb_\Q$,
for some quasi-primal algebra~$\Q$. In the proof, we use Behncke and Leptin's
construction~\cite{BL} of the \emph{covering forest} of a finite ordered set,
which is analogous to the universal covering tree from graph theory.

In Section~\ref{sec: extra}, we obtain representations for some other classes of finite lattices.
We consider finite algebras $\A$ such that each element is the value of a nullary term function,
and prove a simple result (Lemma~\ref{lem: new}) characterising when such an algebra satisfies
$\Lb_\A \cong \Con(\A)$. This allows us to represent a range of finite lattices as the
homomorphism lattice induced by a finite algebra:
\begin{itemize}
\item
every finite partition lattice (Example~\ref{ex: retracts});
\item
the lattice of subspaces of a finite vector space (Example~\ref{ex: retracts});
\item
the lattice $[\Hb, \G] \oplus \mathbf 1$, where $[\Hb, \G]$ denotes an
interval in the subgroup lattice of a finite group $\G$ (Example~\ref{ex: subgroups});
\item
the five-element non-modular lattice $\N_5$ (Example~\ref{ex: pentagon}).
\end{itemize}
We use this representation for finite partition lattices to see that the only universal
first-order sentences true in all homomorphism lattices are those true in all lattices.

There are many unanswered questions concerning the homomorphism lattices
induced by finite algebras. For example:
\begin{itemize}
\item
Does every countable bounded lattice arise as the homomorphism lattice~$\Lb_\A$
induced by a finite algebra~$\A$? In particular, does every finite lattice
arise in this way?
\item
For which finite algebras $\A$ is the lattice $\Lb_\A$ finite? Is this decidable?
\end{itemize}

\section{The homomorphism order}\label{sec: hom order}

In this introductory section, we motivate the \emph{homomorphism lattice} induced by
a finite algebra and give some examples.

\subsection*{The homomorphism order on finite directed graphs}

The motivation for this paper came from Hell and Ne\v{s}et\v{r}il's
text \emph{Graphs and Homomorphisms}~\cite{HN}.

Recall that $\cat G$ denotes the category of finite directed graphs.
Since $\cat G$ has pairwise products, given by direct product,
and coproducts, given by disjoint union, the ordered set
$\Pb_{\cat G} = \langle \cat G/{\equiv}; \to \rangle$ forms a lattice.
Since product distributes over disjoint union, the lattice $\Pb_{\cat G}$ is distributive.
In fact, the lattice $\Pb_{\cat G}$ is relatively pseudocomplemented, via the
exponential construction (see~\cite[Section 2.4]{HN}).

Both the lattice $\Pb_{\cat G}$ and its sublattice $\Pb_{\cat S}$ have been studied extensively,
where $\cat S$ is the category of finite symmetric directed graphs (i.e., finite graphs).
For example, it is known that every countable ordered set embeds into $\Pb_{\cat S}$~\cite{PT,HuN},
and that $\Pb_{\cat S}$ is dense above the complete graph~$\K_2$~\cite{Welzl}
(see~\cite[Section 3.7]{HN}).

\subsection*{The homomorphism order on categories of algebras}

Within many natural categories of finite algebras, all the algebras are
homomorphically equivalent, and so the homomorphism order is trivial: groups, semigroups,
rings and lattices, for example. However, there are also many natural categories of
finite algebras for which the homomorphism order is extremely complicated.

\begin{chatexample}\label{ex: L01}
Consider the category $\cat L_{01}$ of finite bounded lattices. A simple observation is that
there is an infinite ascending chain $\M_3 \to \M_4 \to \M_5 \to \dotsb$
in the homomorphism order $\Pb_{\cat L_{01}}$, where $\M_n$ is the bounded lattice of height~2
with $n$~atoms. In fact, we can say much more.

A variety $\cat V$ of algebras is \emph{finite-to-finite universal} if
the category of directed graphs has a finiteness-preserving full embedding into~$\cat V$.
(This is equivalent to requiring that every variety of algebras has a finiteness-preserving
full embedding into~$\cat V$; see \cite{Pu,HP,PT}). Since the variety of bounded
lattices is finite-to-finite universal (Adams and Sichler~\cite{AS}), it follows that there
is an order-embedding of $\Pb_{\cat G}$ into~$\Pb_{\cat L_{01}}$, and therefore every
countable ordered set embeds into~$\Pb_{\cat L_{01}}$.
\end{chatexample}

\subsection*{The homomorphism lattice induced by a finite algebra}

In general, the coproduct of two finite algebras in a variety does not have to be finite.
Consequently, it is not clear whether the homomorphism order $\Pb_{\cat L_{01}}$ from
Example~\ref{ex: L01} is a lattice. We can avoid this problem if we restrict our attention
to locally finite varieties.

\begin{lemma}
Let $\cat V$ be a locally finite variety. Then the homomorphism order
$\Pb_{\cat V\fin} =  \langle \cat V\fin/{\equiv}; \to \rangle$ is a
countable bounded lattice.
\end{lemma}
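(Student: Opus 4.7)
The plan is to verify three things: that $\Pb_{\cat V\fin}$ has binary meets and joins, that it has a top and a bottom, and that its underlying set is countable. For meets, I would take the direct product: $\A \times \B$ is a finite algebra in $\cat V$ of size $|\A|\cdot|\B|$ whenever $\A,\B \in \cat V\fin$, and the projections together with the universal property of the product make $(\A \times \B)/{\equiv}$ the meet of $\A/{\equiv}$ and $\B/{\equiv}$ in $\Pb_{\cat V\fin}$, exactly as in Figure~\ref{fig: graphs}.

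The interesting step, and what I expect to be the main obstacle, is joins. The coproduct $\A \sqcup \B$ exists in $\cat V$ because every variety has coproducts, but in a general variety the coproduct of two finite algebras can easily be infinite (the free product of two nontrivial finite groups, for example). Local finiteness of $\cat V$ is precisely what saves us: the coprojections $\A \to \A \sqcup \B$ and $\B \to \A \sqcup \B$ jointly generate the coproduct, so its underlying set is generated by at most $|\A|+|\B|$ elements, and hence $\A \sqcup \B$ is a homomorphic image of the $\cat V$-free algebra $\F_{\cat V}(|\A|+|\B|)$, which is finite by local finiteness. The universal property of the coproduct then certifies $(\A \sqcup \B)/{\equiv}$ as the join.

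For bounds and countability, I would argue routinely. The one-element algebra $\1 \in \cat V$ receives a constant homomorphism from every $\A \in \cat V\fin$, giving a top element. The $\cat V$-free algebra $\F_{\cat V}(1)$ is finite by local finiteness and, by freeness, maps into every nonempty $\B \in \cat V\fin$ (send the generator anywhere), giving a bottom element. Finally, every finite algebra in $\cat V$ is a quotient of some $\F_{\cat V}(n)$, each of which is finite and so has only finitely many quotients up to isomorphism, and $n$ ranges over~$\mathbb{N}$; so there are at most countably many isomorphism classes of finite algebras in $\cat V$, and a fortiori only countably many $\equiv$-classes.
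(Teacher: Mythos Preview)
Your proposal is correct and follows essentially the same route as the paper: product for meets, coproduct (finite by local finiteness) for joins, the trivial algebra for the top, free algebras for the bottom, and quotients of the finitely generated free algebras for countability. You supply a bit more detail than the paper does---in particular, your explanation of \emph{why} $\A \sqcup \B$ is finite (it is generated by the images of $A$ and~$B$, hence a quotient of $\F_{\cat V}(|A|+|B|)$) is a welcome expansion of the paper's one-line appeal to local finiteness---but the argument is the same.
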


\begin{proof}
Let $\A, \B \in \cat V\fin$. To see that $\Pb_{\cat V\fin}$ is a lattice,
it suffices to observe that the product $\A \times \B$ is finite and
the coproduct $\A \sqcup \B$ in $\cat V$ is finite (since $\cat V$
is locally finite); see Figure~\ref{fig: graphs}.
The top element of $\Pb_{\cat V\fin}$ contains all the trivial algebras in~$\cat V$
(and consists of all the finite algebras in $\cat V$ with a trivial subalgebra).
The bottom element of $\Pb_{\cat V\fin}$ contains all the finitely generated
free algebras in~$\cat V$.
Note that $\Pb_{\cat V\fin}$ is countable as every finite algebra in $\cat V$
is a homomorphic image of a finitely generated free algebra.
\end{proof}

In this paper, we focus on finitely generated varieties.
Given a finite algebra $\A$, we can define the \emph{homomorphism lattice} induced by~$\A$ to be
\[
 \Lb_\A = \langle \Var(\A)\fin/{\equiv}; \to \rangle.
\]

If the variety $\Var(\A)$ is finite-to-finite universal, then the lattice $\Lb_\A$
is just as complicated as the homomorphism order $\Pb_{\cat G}$ for finite directed graphs,
as $\Pb_{\cat G}$ order-embeds into~$\Lb_\A$. Examples of finite algebras that generate a
finite-to-finite universal variety include:
\begin{itemize}
\item the bounded lattice $\M_3$~\cite{GKS};
\item the algebra $\A = \langle A; \vee,\wedge, 0, 1, a_1, a_2\rangle$, where
  $\langle A; \vee,\wedge, 0, 1\rangle$ is the bounded distributive lattice $\1\oplus \2^2\oplus \1$
  freely generated by $\{a_1, a_2\}$~\cite{AKS}.
\end{itemize}
Even a small unary algebra can generate a finite-to-finite universal variety, as in the following example.

\begin{example}\label{ex: inf}
Let $\U = \langle \{ 0, 1, 2, u, v \}; f_0, f_1 \rangle$ be the five-element unary algebra
shown in Figure~\ref{fig: u}. Then $\Pb_{\cat G}$ order-embeds into~$\Lb_\U$, and therefore
every countable ordered set embeds into~$\Lb_\U$.

\begin{figure}
\begin{tikzpicture}
\begin{scope}[node distance=1.375cm]
  \node[element,label=left: {$0$}] (0) {};
  \node[element,right of=0,label=right: {$1$}] (1) {};
  \node[element,below of=0,label=left: {$u$}] (u) {};
  \node[element,right of=u,label=right: {$v$}] (v) {};
  \node[element,above of=0,xshift=0.625cm,yshift=-0.25cm,label=above: {$2$}] (e) {};
  \coordinate (uv) at ($0.5*(u)+0.5*(v)$);
  \node[below of=uv,yshift=0.25cm] {$\U = \langle \{ 0, 1, 2, u, v \}; f_0, f_1 \rangle$};
  \path[->,operation,solid] (e) edge node[above right] {$f_1$} (1);
  \path[->,operation,solid] (0) edge (v);
  \path[->,operation,solid] (1) edge (v);
  \path[->,operation,solid,move down] (v) edge (u);
  \path[->,loopy operation,solid] (u) edge [out=290,in=250] (u);
  \path[->,operation] (e) edge node[above left] {$f_0$} (0);
  \path[->,operation] (0) edge (u);
  \path[->,operation] (1) edge (u);
  \path[->,operation,move up] (u) edge (v);
  \path[->,loopy operation] (v) edge [out=290,in=250] (v);
\end{scope}
\begin{scope}[xshift=4cm,yshift=-1.375cm,node distance=1cm]
  \node[element,label=below: {$a\vphantom{b}$}] (a) {};
  \node[element,right of=a,label=below: {$b$}] (b) {};
  \node[element,right of=b,label=below: {$c\vphantom{b}$}] (c) {};
  \draw[->,graph] (a) to (b);
  \draw[<->,graph] (b) to (c);
  \path[->,loopy graph] (a) edge [out=110,in=70] (a);
  \node[below of=a,yshift=-0.125cm] {$\G\vphantom{\rangle}$};
\end{scope}
\begin{scope}[xshift=7cm,node distance=1.375cm]
  \node[element,label=left: {$a$}] (a) {};
  \node[element,right of=a,xshift=0.5cm,label=left: {$b$}] (b) {};
  \node[element,right of=b,xshift=0.5cm,label=right: {$c$}] (c) {};
  \node[element,below of=a,xshift=0.75cm,label=left: {$u$}] (u) {};
  \node[element,right of=u,xshift=0.5cm,label=right: {$v$}] (v) {};
  \node[element,above of=a,xshift=0.75cm,yshift=-0.25cm,label=above: {$(a,b)$}] (ab) {};
  \node[element,above of=a,yshift=-0.25cm,label=above: {$(a,a)$\hspace*{0.5cm}}] (aa) {};
  \node[element,above of=b,yshift=-0.25cm,label=above: {\hspace*{0.5cm}$(b,c)$}] (bc) {};
  \node[element,above of=c,yshift=-0.25cm,label=above: {$(c,b)$}] (cb) {};
  \node[below of=u,yshift=0.25cm] {$\G^*\vphantom{\rangle}$};
  \path[->,operation,solid,move right] (aa) edge (a);
  \path[->,operation,solid] (ab) edge (b);
  \path[->,operation,solid] (bc) edge (c);
  \path[->,operation,solid] (cb) edge (b);
  \path[->,operation,solid] (a) edge (v);
  \path[->,operation,solid] (b) edge (v);
  \path[->,operation,solid] (c) edge (v);
  \path[->,operation,solid,move far down] (v) edge (u);
  \path[->,loopy operation,solid] (u) edge [out=290,in=250] (u);
  \path[->,operation,move left] (aa) edge (a);
  \path[->,operation] (ab) edge (a);
  \path[->,operation] (bc) edge (b);
  \path[->,operation] (cb) edge (c);
  \path[->,operation] (a) edge (u);
  \path[->,operation] (b) edge (u);
  \path[->,operation] (c) edge (u);
  \path[->,operation] (u) edge (v);
  \path[->,loopy operation] (v) edge [out=290,in=250] (v);
\end{scope}
\end{tikzpicture}
\caption{A finite unary algebra $\U$ with infinite lattice $\Lb_\U$.}\label{fig: u}
\end{figure}
\end{example}

\begin{proof}
The values of the constant term functions of~$\U$ are $u$ and~$v$.
To simplify the proof, we will add $u$ and $v$ to the signature of~$\U$ as nullary operations;
this has no effect on $\Var(\U)$, up to term equivalence.

We will use a construction of Hedrl\'{\i}n and Pultr~\cite{HP}.
Given a directed graph $\G = \langle G; r \rangle$,
define the algebra $\G^* = \langle G \cup r \cup \{u,v\} ; f_0, f_1, u, v \rangle$, where
\begin{alignat*}{4}
f_0(g) &= u, &\qquad f_0((g_0,g_1)) &= g_0, &\qquad f_0(u) &= v, &\qquad f_0(v) &= v,\\
f_1(g) &= v, &\qquad f_1((g_0,g_1)) &= g_1, &\qquad f_1(u) &= u, &\qquad f_1(v) &= u,
\end{alignat*}
for all $g\in G$ and $(g_0,g_1) \in r$. (We assume that $u,v \notin G \cup r$.)
See Figure~\ref{fig: u} for an example of the algebra $\G^*$ constructed from
a directed graph~$\G$.

Each one-generated subalgebra of $\G^*$ is a homomorphic image of a subalgebra of~$\U$,
and therefore belongs to $\Var(\U)$. Thus $\G^*$ satisfies all one-variable equations
that are true in~$\U$. Since each constant term function of~$\U$ has value $u$ or~$v$,
the two-variable equations $t_1(x)\approx t_2(y)$ that are true in~$\U$ follow from
one-variable equations of the form $t(x)\approx u$ and $t(x)\approx v$.
Hence $\G^* \in \Var(\U)$.

Hedrl\'{\i}n and Pultr~\cite{HP} showed that there is a bijection between
$\hom(\G,\Hb)$ and $\hom(\G^*,\Hb^*)$. It follows immediately that $\Pb_{\cat G}$
order-embeds into~$\Lb_\U$.
\end{proof}

To contrast with the previous example, we finish this section by describing
the lattice~$\Lb_\A$, for each finite monounary algebra $\A = \langle A; f\rangle$.
We say that a non-empty subset $\{a_0,a_1,\dots,a_{n-1}\}$ of~$A$ is a \emph{cycle} if
$f(a_i)=a_{i+1\pmod n}$. For each $k\in \mathbb N$, we use $\mathbf k$
to denote the $k$-element chain.

\begin{example}\label{ex:monounary}
Let $\A = \langle A; f\rangle$ be a finite monounary algebra and
let $n$ be the least common multiple of the sizes of the cycles of\/~$\A$.
If\/ $n = 1$, then\/ $\Lb_\A \cong \1$. Otherwise, let $p_1^{k_1}\dotsm p_\ell^{k_\ell}$
be the prime decomposition of~$n$. Then
\[
 \Lb_\A \cong (\mathbf{k}_1 \sqcup \dots \sqcup \mathbf{k}_\ell) \oplus \1,
\]
where the coproduct is taken in the variety $\cat D$ of distributive lattices.
\end{example}

\begin{proof}
If $n = 1$, then every algebra in $\Var(\A)\fin$ has a trivial subalgebra,
and so $|L_\A| = 1$. Now assume that $n \ge 2$.
Let $D_n$ be the set of positive divisors of~$n$, and let
$\D_n = \langle D_n; \mathrm{lcm}, \mathrm{gcd} \rangle$ be the divisor lattice of~$n$,
where $a \le b$ in $\D_n$ if and only if $a$ divides~$b$. Then we have
$\D_n \cong (\mathbf{k}_1\oplus \1) \times \dots \times (\mathbf{k}_\ell\oplus \1)$.

Let $\Up^+(\D_n)$ denote the lattice of all non-empty up-sets of~$\D_n$, ordered by inclusion.
We start by showing that $\Lb_\A$ is isomorphic to $\Up^+(\D_n)$.

For each finite monounary algebra $\B$, define
\[
 \Cyc(\B) := \{\, |C| : \text{$C$ is a cycle of $\B$} \,\}.
\]
For all $\B \in \Var(\A)\fin$, we have
$\Cyc(\B) \subseteq D_n$, as there exists $m\in \mathbb{N}$ such that
$\A$ satisfies the equation $f^m(x) \approx f^{m+n}(x)$. For all $\B_1, \B_2 \in \Var(\A)\fin$,
we have $\B_1 \to \B_2$ if and only if $\Cyc(\B_1) \subseteq \uarrow \Cyc(\B_2)$ in~$\D_n$.
It follows that we can define an order-embedding $\psi \colon \Lb_\A \to \Up^+(\D_n)$ by
\[
 \psi(\B/{\equiv}) := \uarrow \Cyc(\B)
\]
To see that the map $\psi$ is surjective, let $\upset \in \Up^+(\D_n)$.
Choose a finite monounary algebra $\B$ that is a disjoint union of cycles and
satisfies $\Cyc(\B) = \upset$. Since each equation true in~$\A$ is of the form
$f^j(x) \approx f^{j+kn}(x)$, for some $j,k\in \mathbb{N} \cup \{0\}$,
it follows that $\B \in \Var(\A)\fin$. Clearly, we have $\psi(\B/{\equiv}) = \upset$.
Hence $\Lb_\A$ is isomorphic to the lattice $\Up^+(\D_n)$.

The lattice $\D_n$ is self-dual, and therefore $\Lb_\A$ is also isomorphic to the lattice
$\properdownsets{\D_n}$ of all non-empty down-sets of~$\D_n$, ordered by inclusion.
Using Priestley duality for the variety $\cat D_{01}$ of bounded distributive lattices
(see~\cite{DP:ilo}), we first describe the lattice $\downsets {\D_n}$ of all down-sets of~$\D_n$:
\begin{align*}
\downsets{\D_n}
 &\cong \downsets{(\mathbf{k}_1\oplus \1) \times \dots \times (\mathbf{k}_\ell\oplus \1)} \\
 &\cong \downsets{\mathbf{k}_1\oplus \1} \sqcup_{01} \dots \sqcup_{01} \downsets{\mathbf{k}_\ell\oplus \1} \\
 &\cong (\1\oplus\mathbf{k}_1\oplus \1) \sqcup_{01} \dots \sqcup_{01} (\1\oplus\mathbf{k}_\ell\oplus \1) \\
 &\cong \1\oplus(\mathbf{k}_1 \sqcup \dots \sqcup \mathbf{k}_\ell) \oplus \1,
\end{align*}
where $\sqcup_{01}$ denotes coproduct in~$\cat D_{01}$ and $\sqcup$~denotes coproduct in~$\cat D$. Hence
\[
\Lb_\A \cong \properdownsets{\D_n} \cong (\mathbf{k}_1 \sqcup \dots \sqcup \mathbf{k}_\ell) \oplus \1,
\]
as claimed.
\end{proof}

Note that, if a finite unary algebra $\A$ has no constant term functions, then coproduct in $\Var(\A)$
is disjoint union; so product distributes over coproduct, and therefore $\Lb_\A$ is distributive.
(In fact, such a variety has a natural exponentiation, and therefore
$\Lb_\A$~is relatively pseudocomplemented; see~\cite{NPT}.)
However, the homomorphism lattice induced by a finite unary algebra is not necessarily distributive.
Using Corollary~\ref{cor: retract} and Example~\ref{ex: retracts}(i), each finite partition lattice
arises as~$\Lb_\A$, for some finite unary algebra~$\A$.

\begin{remark}
While we will not be making use of cores in this paper, they serve as natural
representatives for the elements of the homomorphism lattice~$\Lb_\A$,
for a finite algebra~$\A$. So the lattice $\Lb_\A$ is finite if and only if
there is a finite bound on the sizes of the cores in $\Var(\A)\fin$.

A finite algebra $\C$ is a \emph{core} if every endomorphism of~$\C$ is an automorphism.
For each finite algebra~$\B$, there is a retraction $\phi \colon \B \to \C$ such that $\C$ is a core
(unique up to isomorphism). If we let $\cat C$ consist of one copy (up to isomorphism)
of each core in $\Var(\A)\fin$, then $\cat C$ is a transversal of the equivalence classes of~$\Lb_\A$.
Moreover, the cores are precisely the algebras that are minimal-sized within their equivalence classes.
\end{remark}

\section{The homomorphism lattice induced by a quasi-primal algebra}\label{sec: Q to L}

This section focuses on the homomorphism lattice $\Lb_\Q$ in the case that $\Q$ is a
quasi-primal algebra. We give a simple description of the lattice~$\Lb_\Q$ that could be converted
into an algorithm for computing this lattice. The description will play a pivotal role in
Section~\ref{sec: L to Q}, where we show that each finite distributive lattice can
be obtained as~$\Lb_\Q$, for some quasi-primal algebra~$\Q$.

A finite algebra $\Q$ is \emph{quasi-primal} if the
\emph{ternary discriminator} operation $\tau$ is a term function, where
\[
\tau(x,y,z) :=
 \begin{cases}
 x &\text{if $x \neq y$,}\\
 z &\text{if $x = y$.}
 \end{cases}
\]
(This implies that every finitary operation on~$Q$ that
preserves the partial automorphisms of~$\Q$ is a term function;
see Pixley~\cite{P71} and Werner~\cite{W78}).
We will use the following two general results about quasi-primal algebras.

\begin{theorem}[Pixley~{\cite[Theorem~5.1]{P70}}]\label{th: simple}
A finite algebra $\Q$ is quasi-primal if and only if
every non-trivial subalgebra of\/~$\Q$ is simple and the variety\/
$\Var(\Q)$ is both congruence permutable and congruence distributive.
\end{theorem}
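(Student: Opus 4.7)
For the forward direction, I would start by checking directly that the ternary discriminator $\tau$ satisfies the Pixley identities $\tau(x,y,y) = \tau(y,y,x) = \tau(x,y,x) = x$, by case analysis on whether the first two arguments coincide. Since $\Q$ realises $\tau$ by a term, $\Var(\Q)$ has a Pixley term and is therefore arithmetical, giving both congruence permutability and distributivity. For the simplicity claim, let $\Sb$ be a non-trivial subalgebra of $\Q$ and let $\theta$ be a congruence on $\Sb$ with $a\mathrel{\theta}b$ for some $a\ne b$. For any $c\in S$, the computation
\[
c \;=\; \tau(a,a,c) \;\mathrel{\theta}\; \tau(a,b,c) \;=\; a
\]
forces every element of $S$ to be $\theta$-related to $a$, so $\theta$ is the universal congruence and $\Sb$ is simple.

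The converse is the substantive direction. Assume $\Var(\Q)$ is arithmetical and every non-trivial subalgebra of $\Q$ is simple; the task is to produce a term realising $\tau$ on $\Q$. My plan is first to classify the subalgebras of $\Q^2$ and then to apply a clone-theoretic criterion. For the classification, I would invoke Fleischer's lemma for congruence-permutable varieties: any subdirect subalgebra $R\le \Sb_1\times \Sb_2$ arises from an isomorphism $\phi\colon \Sb_1/\alpha_1 \to \Sb_2/\alpha_2$ for some congruences $\alpha_i$ on~$\Sb_i$. Simplicity of the non-trivial $\Sb_i$ forces each $\alpha_i$ to be trivial or total, and matching of quotients leaves only two possibilities: $R$ is either the graph of an isomorphism $\Sb_1\to \Sb_2$, or the full product $\Sb_1\times \Sb_2$.

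Next I would verify directly that $\tau$, applied componentwise, preserves each such~$R$. The product case is immediate. For a graph $R=\{(x,\phi(x)) : x\in S_1\}$, I split on whether the first two inputs agree in the first coordinate: if they do, both coordinates output the third entry; if they do not, bijectivity of~$\phi$ ensures disagreement in the second coordinate too, so both coordinates output the first entry. Either way the output lies on the graph of~$\phi$. Finally, I would invoke Pixley's interpolation theorem for arithmetical varieties: in a variety with a Pixley term, an operation on $Q$ is a term operation of~$\Q$ if and only if it preserves every subalgebra of~$\Q^2$. Hence $\tau$ is a term function, and $\Q$ is quasi-primal.

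The main obstacle is this last step, the Pixley interpolation reduction from preservation on arbitrary finite powers $\Q^n$ to preservation on~$\Q^2$; this is the genuine clone-theoretic content of the theorem. By comparison, the Fleischer-style classification of subalgebras of $\Q^2$ and the componentwise verification that $\tau$ commutes with bijections are essentially bookkeeping once the right structural description is in place.
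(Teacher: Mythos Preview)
The paper does not prove this theorem; it is quoted from Pixley~\cite[Theorem~5.1]{P70} and used as a black box in the subsequent description of~$\Lb_\Q$. So there is no in-paper argument to compare against, and your outline is really a reconstruction of Pixley's original proof.

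That reconstruction is correct. The forward direction is routine and your computation is clean. For the converse, the Fleischer classification of subdirect products combined with simplicity does indeed force every subalgebra of~$\Q^2$ to be either a product $\Sb_1\times\Sb_2$ of subalgebras of~$\Q$ or the graph of an isomorphism between two subalgebras, and your case split showing that $\tau$ preserves both types is accurate (injectivity of the isomorphism is exactly what synchronises the two coordinates of~$\tau$). Two small points are worth making explicit. First, the degenerate case where one projection $\Sb_i$ is trivial falls under the product case automatically, but you should say so rather than leave it implicit in ``simplicity of the non-trivial~$\Sb_i$''. Second, the ``Pixley interpolation theorem for arithmetical varieties'' you invoke at the end is most transparently justified by observing that a Pixley term $p$ yields a majority term via $m(x,y,z):=p(x,p(x,y,z),z)$, after which the Baker--Pixley theorem reduces term-definability on~$\Q$ to preservation of subalgebras of~$\Q^2$. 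With those two remarks filled in, your sketch is complete and matches the standard proof.
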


\begin{theorem}[Pixley~{\cite[Theorem~4.1]{P70}}]\label{th: prod of sub}
Let\/ $\Q$ be a quasi-primal algebra. Then every finite algebra in
$\Var(\Q)$ is isomorphic to a product of subalgebras of\/~$\Q$.
\end{theorem}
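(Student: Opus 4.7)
The natural approach is to combine Birkhoff's subdirect representation theorem with the structural information about $\Q$ supplied by Theorem~\ref{th: simple}. Since $\Var(\Q)$ is congruence distributive and finitely generated, J\'onsson's lemma (in its familiar finite form) guarantees that every finite subdirectly irreducible algebra in $\Var(\Q)$ lies in $HS(\Q)$. By Theorem~\ref{th: simple}, every non-trivial subalgebra $\Sb \le \Q$ is simple, so its only non-trivial homomorphic image is $\Sb$ itself. Hence, up to isomorphism, the non-trivial finite subdirectly irreducibles in $\Var(\Q)$ are exactly the non-trivial subalgebras of~$\Q$.

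Now let $\A$ be a finite algebra in $\Var(\Q)$; the case $|A|=1$ is handled trivially by the empty product, so assume $|A| \ge 2$. Birkhoff's theorem supplies finitely many distinct congruences $\theta_1,\dots,\theta_n \in \Con(\A)$, each strictly below $1_\A$, with $\bigcap_{i=1}^n \theta_i = 0_\A$ and each quotient $\A/\theta_i$ a non-trivial subalgebra of~$\Q$. Since each $\A/\theta_i$ is simple, each $\theta_i$ is a maximal element of $\Con(\A)$; distinct maximal congruences automatically satisfy $\theta_i \vee \theta_j = 1_\A$ whenever $i \ne j$, because their join strictly contains at least one of them.

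To promote the resulting subdirect embedding $\A \hookrightarrow \prod_{i=1}^n \A/\theta_i$ to an isomorphism, I would invoke the Chinese Remainder argument available in any congruence distributive and congruence permutable variety. Distributivity of $\Con(\A)$ yields
\[
 \theta_i \vee \textstyle\bigcap_{j \ne i} \theta_j \;=\; \bigcap_{j \ne i}\bigl(\theta_i \vee \theta_j\bigr) \;=\; 1_\A
\]
for each~$i$, and congruence permutability then forces the canonical map $\A \to \prod_i \A/\theta_i$ to be surjective; combined with the kernel condition $\bigcap_i \theta_i = 0_\A$, this gives the desired isomorphism.

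The main obstacle is this last step: the structural fact that finite subdirectly irreducibles embed into $\Q$ is not by itself enough to pass from a subdirect product to a genuine direct product, and it is precisely here that congruence permutability must enter. The role of distributivity is the complementary one of upgrading the a priori weak pairwise-comaximality $\theta_i \vee \theta_j = 1_\A$ to the full Chinese Remainder condition $\theta_i \vee \bigcap_{j \ne i} \theta_j = 1_\A$, so both halves of Theorem~\ref{th: simple} are used in essential ways.
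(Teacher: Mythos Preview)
Your argument is correct. The paper does not prove Theorem~\ref{th: prod of sub} at all: it is quoted as a known result of Pixley~\cite{P70}, so there is no in-paper proof to compare against. Your approach---J\'onsson's Lemma to identify the finite subdirectly irreducibles with the non-trivial subalgebras of~$\Q$, followed by a Chinese Remainder argument using congruence distributivity (to upgrade pairwise comaximality of the distinct maximal $\theta_i$ to $\theta_i \vee \bigcap_{j\ne i}\theta_j = 1_\A$) and congruence permutability (to obtain surjectivity onto the product)---is the standard route and is exactly how one proves this from Theorem~\ref{th: simple}.
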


We can now give our description of the lattice~$\Lb_\Q$. For an ordered set~$\Pb$,
we again use $\downsets \Pb$ to denote the lattice of all down-sets of~$\Pb$, ordered by inclusion.
We use $\Sub(\A)$ to denote the set of all subalgebras of an algebra~$\A$.

\begin{theorem}\label{th: iso}
Let $\Q$ be a quasi-primal algebra. Define the ordered set
\[
\Pb := \langle \Sub(\Q)/{\equiv}; \to \rangle
\]
and let\/ $\overline \Pb$ denote $\Pb$ without its top.
\begin{enumerate}[label={\upshape(\roman*)}]
\item If\/ $\Q$ has no trivial subalgebras, then $\Lb_\Q \cong \downsets \Pb$.
\item If\/ $\Q$ has a trivial subalgebra, then $\Lb_\Q \cong \downsets {\overline{\Pb}}$.
\end{enumerate}
\end{theorem}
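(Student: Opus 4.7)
The plan is to exploit Pixley's structure theorem (Theorem~\ref{th: prod of sub}): every finite $\B \in \Var(\Q)$ is, up to isomorphism, a product $\Sb_1 \times \cdots \times \Sb_n$ of subalgebras of~$\Q$. Using this, I associate to each such $\B$ the up-set
\[
U(\B) := \{\, \Sb/{\equiv} \in \Pb \mid \B \to \Sb \,\}
\]
and define $\phi \colon \Lb_\Q \to \downsets{\Pb}$ by $\phi(\B/{\equiv}) := \Pb \setminus U(\B)$. The goal is to show that $\phi$ is a lattice isomorphism onto the appropriate target.

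The main technical step, and the expected main obstacle, is the following claim: for $\B \cong \Sb_1 \times \cdots \times \Sb_n$ with each $\Sb_i \in \Sub(\Q)$ and $n \ge 1$, and any $\mathbf T \in \Sub(\Q)$, we have $\B \to \mathbf T$ if and only if $\Sb_i \to \mathbf T$ for some~$i$. The ``$\Leftarrow$'' direction is immediate via the projection $\pi_i$. For ``$\Rightarrow$'', let $\alpha \colon \B \to \mathbf T$ and set $\mathbf T' := \alpha(\B) \in \Sub(\Q)$. If $\mathbf T'$ is trivial, the constant map gives $\Sb_1 \to \mathbf T' \hookrightarrow \mathbf T$. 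Otherwise $\mathbf T'$ is simple by Theorem~\ref{th: simple}, so $\ker \alpha$ is a coatom, hence meet-irreducible, in $\Con(\B)$. Since $\Var(\Q)$ is congruence permutable and distributive (Theorem~\ref{th: simple}), $\Con(\B) \cong \prod_i \Con(\Sb_i)$, with each factor a one- or two-element chain by simplicity of the non-trivial $\Sb_i$; the meet-irreducibles of $\Con(\B)$ are thus precisely the projection kernels $\ker \pi_j$ with $\Sb_j$ non-trivial. Hence $\ker \alpha = \ker \pi_j$ for some such $j$, so $\alpha$ factors through $\pi_j$ to yield $\Sb_j \to \mathbf T' \hookrightarrow \mathbf T$.

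The claim gives $U(\B) = \uarrow\{\Sb_1/{\equiv}, \ldots, \Sb_n/{\equiv}\}$, making $\phi$ well-defined on $\equiv$-classes. Order-preservation and reflection follow: $\B \to \C$ forces $U(\C) \subseteq U(\B)$ by transitivity, while conversely, writing $\C \cong \mathbf T_1 \times \cdots \times \mathbf T_m$, each $\mathbf T_j/{\equiv} \in U(\C) \subseteq U(\B)$ gives $\B \to \mathbf T_j$ and hence $\B \to \C$. For surjectivity, given a down-set $D$ in the target, let $U := \Pb \setminus D$ and pick representatives $\Sb_1, \ldots, \Sb_n$ of its minimal elements (a finite, possibly empty, set since $\Pb$ is finite); then $\B := \prod_i \Sb_i$ (interpreted as the trivial algebra if $n = 0$) satisfies $U(\B) = \uarrow\{\Sb_i/{\equiv}\} = U$, giving $\phi(\B/{\equiv}) = D$.

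Finally, the two cases are distinguished by the image of $\phi$. In case~(i) no $\Sb \in \Sub(\Q)$ has a trivial subalgebra (since any trivial subalgebra of $\Sb \le \Q$ is one of~$\Q$), so $U(\1) = \emptyset$ and the surjectivity construction covers every $D \in \downsets{\Pb}$, including $D = \Pb$. In case~(ii), any trivial subalgebra of $\Q$ represents the top $\top$ of $\Pb$, and the constant map $\B \to \1$ shows $\top \in U(\B)$ for every $\B$; hence $\phi(\B) \subseteq \overline{\Pb}$ for all $\B$, and the surjectivity construction restricted to $D \in \downsets{\overline{\Pb}}$ (taking $\B$ to be any trivial subalgebra of $\Q$ when $D = \overline{\Pb}$) yields the isomorphism $\Lb_\Q \cong \downsets{\overline{\Pb}}$.
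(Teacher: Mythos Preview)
Your proof is correct and follows essentially the same strategy as the paper: both reduce the problem, via Pixley's structure theorem, to the key fact that a finite product $\Sb_1 \times \cdots \times \Sb_n$ of subalgebras of~$\Q$ maps to a subalgebra $\mathbf T$ if and only if some factor does, and both split on whether the image is trivial. The only noteworthy differences are cosmetic: you build the isomorphism in the direction $\Lb_\Q \to \downsets{\Pb}$ via $\B \mapsto \Pb \setminus U(\B)$, whereas the paper builds its inverse $\upset \mapsto (\prod \upset)/{\equiv}$; and for the non-trivial-image case you argue directly that $\Con(\B) \cong \prod_i \Con(\Sb_i)$ is Boolean (using the Fraser--Horn property for congruence-distributive varieties) so that the coatom $\ker\alpha$ must be a projection kernel, whereas the paper simply invokes the finite-product form of J\'onsson's Lemma---which is the same argument packaged as a citation.
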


\begin{proof}
To simplify the notation, let $\mathscr P$ be a transversal of $\Sub(\Q)/{\equiv}$
and define the ordered set $\Pb := \langle \mathscr P; \to \rangle$.
Without loss of generality, we can assume that $\Q \in \mathscr P$, whence
$\Q$ is the top element of~$\Pb$.

We prove the theorem via a sequence of four claims.

\begin{claim}\label{clm: up-set prod}
For each non-trivial finite algebra $\A \in \Var(\Q)$, there exists a non-empty
up-set\/~$\upset$ of\/~$\Pb$ such that $\A \equiv \prod \upset$.
\end{claim}

\begin{proofofclaim}{\ref{clm: up-set prod}}
Let $\A$ be a non-trivial finite algebra in $\Var(\Q)$.
By Theorem~\ref{th: prod of sub}, we know that $\A$ is isomorphic to a product
$\prod_{i \in I} \B_i$ of subalgebras of~$\Q$, for some non-empty set~$I$.
For each $i \in I$, there exists $\Q_i \in \mathscr P$ such that $\Q_i \equiv \B_i$.
Now define the up-set $\upset$ of~$\Pb$ by
\[
\upset := \uarrow \{\, \Q_i \mid i \in I \,\}.
\]
For each $i\in I$, we have $\Q_i \in \upset$ with $\Q_i \to \B_i$.
Thus $\prod \upset \to \prod_{i \in I} \B_i$. For each $\C \in \upset$,
there exists $i \in I$ such that $\Q_i \to \C$ and so $\B_i \to \C$.
Thus $\prod_{i \in I}\B_i \to \prod \upset$. We have shown that
$\A \cong \prod_{i \in I} \B_i \equiv \prod \upset$, as required.
\end{proofofclaim}

\begin{claim}\label{clm: upP into LsubQ}
Let $\Up^+(\Pb)$ denote the set of all non-empty up-sets of\/~$\Pb$.
Then there is an order-embedding $\sigma \colon \langle \Up^+(\Pb) ; \supseteq \rangle \to \Lb_\Q$
given by $\sigma(\upset) = \bigl(\prod \upset\bigr)/{\equiv}$.
\end{claim}

\begin{proofofclaim}{\ref{clm: upP into LsubQ}}
Note that $\sigma$ is well defined, as each up-set $\upset$ of~$\Pb$ is a finite set
of subalgebras of~$\Q$. To see that $\sigma$ is order-preserving, let
$\upset_1, \upset_2 \in \Up^+(\Pb)$ with $\upset_1 \supseteq \upset_2$.
Then $\prod \upset_1 \to \prod \upset_2$, via a projection, which gives
$\sigma(\upset_1) \to \sigma(\upset_2)$.

To prove that $\sigma$ is an order-embedding, let
$\upset_1, \upset_2 \in \Up^+(\Pb)$ with $\sigma(\upset_1) \to \sigma(\upset_2)$.
Then there is a homomorphism $\alpha \colon \prod \upset_1 \to \prod \upset_2$.
We want to show that $\upset_1 \supseteq \upset_2$, so let $\A \in \upset_2$.
Then there is a projection $\pi \colon \prod \upset_2 \to \A$.
Thus, we obtain a homomorphism $\eta \colon \prod \upset_1 \to \A$, where
$\eta = \pi \circ \alpha$. Define $\B := \Image(\eta) \le \A$.
To prove that $\A \in \upset_1$, we consider two cases.

\Case{Case \textup{(}a\textup{):} $\B$ is trivial.}
Then $\A$ has a trivial subalgebra, so $\Q \to \A$, via a constant map.
Since $\A \in \mathscr P$ and $\Q$ is the top element of~$\Pb$, we have
$\A = \Q \in \upset_1$, as required.

\Case{Case \textup{(}b\textup{):} $\B$ is non-trivial.}
Since the subalgebra $\B$ of~$\Q$ is simple and $\Var(\Q)$ is congruence distributive
(see Theorem~\ref{th: simple}), we can apply the finite-product version of
J\'{o}nsson's Lemma to the surjection $\eta \colon \prod \upset_1 \to \B$.
For some $\C \in \upset_1$, there is a projection $\rho \colon \prod \upset_1 \to \C$
and a homomorphism $\eta^\flat \colon \C \to \B$ such that the following diagram commutes.

\begin{center}
\begin{tikzpicture}[node distance=1.5cm]
\node (a) {$\prod \upset_1$};
\node (b) [right of = a] {$\B$};
\node (c) [below of = a] {$\C$};
\node (d) [right of = b] {$\A$};
\draw[->>] (a) -- node[above]{$\eta$} (b);
\draw[->] (a) -- node[left]{$\rho$} (c);
\draw[->] (c) -- node[below right]{$\eta^\flat$} (b);
\draw[>->] (b) -- (d);
\end{tikzpicture}
\end{center}

We have found $\C \in \upset_1$ such that $\C \to \A$. Since $\A \in \mathscr P$
and $\upset_1$~is an up-set of~$\Pb$, we have shown that $\A \in \upset_1$,
as required.
\end{proofofclaim}

\begin{claim}\label{clm: parti}
If\/ $\Q$ has no trivial subalgebras, then $\Lb_\Q \cong \downsets \Pb$.
\end{claim}

\begin{proofofclaim}{\ref{clm: parti}}
Assume that $\Q$ has no trivial subalgebras.
Let $\Up(\Pb)$ denote the set of all up-sets of~$\Pb$. We want to extend the
order-embedding $\sigma$ from Claim~\ref{clm: upP into LsubQ} to an order-isomorphism
$\sigma^\sharp \colon \langle \Up(\Pb) ; \supseteq \rangle \to \Lb_\Q$
given by $\sigma^\sharp(\upset) = \bigl(\prod \upset\bigr)/{\equiv}$.

For all $\upset \in \Up(\Pb)$, we have $\prod \upset \to \prod \emptyset$, via the constant
map. Thus it follows that $\sigma^\sharp$ is order-preserving. To see that $\sigma^\sharp$ is an
order-embedding, it remains to show that $\sigma^\sharp(\emptyset) \not\to \sigma^\sharp(\upset)$,
for all $\upset \in \Up^+(\Pb)$.

Let $\upset \in \Up^+(\Pb)$ and suppose that $\sigma^\sharp(\emptyset) \to \sigma^\sharp(\upset)$.
Then $\prod \emptyset \to \prod \upset$. Since the up-set $\upset$ is
non-empty, it contains the top element $\Q$ of~$\Pb$.
So $\prod \upset \to \Q$, via a projection, and thus $\prod \emptyset \to \Q$.
But this implies that $\Q$ has a trivial subalgebra, which is a contradiction.
Hence $\sigma^\sharp$ is an order-embedding.

The map $\sigma^\sharp$ is surjective, by Claim~\ref{clm: up-set prod}, since
$\sigma^\sharp(\emptyset) = \1/{\equiv}$, for any trivial algebra $\1 \in \Var(\Q)$.
Hence $\Lb_\Q \cong \langle \Up(\Pb) ; \supseteq \rangle \cong \downsets \Pb$.
\end{proofofclaim}

\begin{claim}\label{clm: partii}
If\/ $\Q$ has a trivial subalgebra, then $\Lb_\Q \cong \downsets {\overline{\Pb}}$,
where $\overline \Pb$ denotes the ordered set\/ $\Pb$ without its top.
\end{claim}

\begin{proofofclaim}{\ref{clm: partii}}
Let $\1$ be a trivial subalgebra of~$\Q$. Note that $\1 \equiv \Q$.
By Claim~\ref{clm: upP into LsubQ}, there is an order-embedding
$\sigma \colon \langle \Up^+(\Pb) ; \supseteq \rangle \to \Lb_\Q$
given by $\sigma(\upset) = \bigl(\prod \upset\bigr)/{\equiv}$.
The map $\sigma$ is surjective, by Claim~\ref{clm: up-set prod},
since $\{\Q\}$ is an up-set of $\Pb$ with $\prod \{\Q\} \cong \Q \equiv \1$.
Hence we have $\Lb_\Q \cong \langle \Up^+(\Pb) ; \supseteq \rangle \cong \downsets {\overline\Pb}$.
\end{proofofclaim}

Claims~\ref{clm: parti} and~\ref{clm: partii} complete the proof of the theorem,
as $\Pb = \langle \mathscr P; \to \rangle$ is isomorphic to
$\langle \Sub(\Q)/{\equiv}; \to \rangle$.
\end{proof}

It follows immediately from Theorem~\ref{th: iso} that $\Lb_\Q$ is a finite distributive
lattice, for each quasi-primal algebra~$\Q$. Moreover, since the ordered set $\Pb$ always has a top,
it also follows from the theorem that, if $\Q$ has no trivial subalgebras,
then the lattice $\Lb_\Q$ has a join-irreducible top.

\begin{remark}
For a quasi-primal algebra $\Q$ with no trivial subalgebras, the variety $\Var(\Q)$
satisfies a stronger property that implies the lattice $\Lb_\Q$ is distributive.
We can see this using the full duality for $\Var(\Q)$ described by Davey and
Werner~\cite[Section 2.7]{DW:first} (based on the duality given by Keimel and Werner~\cite{KW}).

If $\Q$ has no trivial subalgebras, then there is a dual equivalence between $\Var(\Q)$
and a category~$\cat X$ of topological partial unary algebras, where the product in~$\cat X$
is direct product and the pairwise coproduct in~$\cat X$ is disjoint union.
Therefore product distributes over coproduct in~$\cat X$,
and so coproduct distributes over product in $\Var(\Q)$,
which implies that $\Lb_\Q$ is distributive.

Varieties of algebras in which coproduct distributes over product are, in a sense, more common
than those in which product distributes over coproduct; see~\cite{DW:dist}.
\end{remark}

\section{The covering forest of a finite ordered set}\label{sec: tree}

In the next section, we will prove that every finite distributive lattice arises
as a homomorphism lattice $\Lb_\Q$, for some quasi-primal algebra~$\Q$. We use
a construction, introduced by Behncke and Leptin~\cite{BL}, for converting a
finite ordered set into a forest. In this section, we establish some useful
properties of this construction, and show the relationship with the universal
covering tree from graph~theory.

A \emph{forest} is an ordered set $\F$ such that, for all $a\in F$ and all $b,c\in \uarrow a$,
we have $b \le c$ or $c \le b$. A \emph{tree} is a connected forest.
Given a finite ordered set~$\Pb$, we shall construct a finite forest~$\F$
with a surjective order-preserving map $\phi \colon \F \to \Pb$.

Recall that, given an alphabet $P$, the set of all finite words in~$P$ is denoted by~$P^*$.
For elements $a$ and~$b$ of an ordered set~$\Pb$, we write $a\prec b$ to indicate that
$a$~is covered by~$b$ in~$\Pb$. We use $\Max(\Pb)$ for the set of all maximal elements of~$\Pb$
and $\Min(\Pb)$ for the set of all minimal elements of~$\Pb$.

\begin{definition}[Behncke and Leptin~\cite{BL}]\label{def: s}
Let $\Pb$ be a finite ordered set.
\begin{enumerate}[leftmargin=2em,itemsep=0.5ex]
\item
Consider the set of all covering chains in $\Pb$ that reach a maximal element,
viewed as words in the alphabet~$P$:
\[
F := \{\, a_1 a_2 \dots a_n \in P^* \mid
  n\in \mathbb{N},\ a_1 \prec a_2 \prec \cdots \prec a_n \text{ and } a_n \in \Max(\Pb) \,\}.
\]
We define the \emph{covering forest}
of~$\Pb$ to be the ordered set $\F = \langle F ; \le \rangle$, where
\[
w \le v \iff (\exists u \in P^*)\ w = uv.
\]
(That is, we have $w \le v$ if and only if $v$ is a final segment of~$w$.)
It is easy to check that the ordered set $\F$ is indeed a forest.
\item
We can define the order-preserving surjection $\phi \colon \F \to \Pb$ by
\[
\phi(a_1a_2\dots a_n) := a_1,
\]
for each $a_1a_2\dots a_n \in F$.
\item
If the ordered set $\Pb$ has a top element, then its covering forest $\F = \langle F ; \le \rangle$
is a tree, and we refer to $\F$ as the \emph{covering tree} of~$\Pb$.
\end{enumerate}
In Figure~\ref{fig: P to S}, we see an example of the covering forest~$\F$ of an
ordered set~$\Pb$ and the map $\phi \colon F \to P$.
\end{definition}

\begin{figure}[t]
\begin{tikzpicture}[node distance=1cm]
\begin{scope}[node distance=1cm]
 \node[element,label=left: {\small 5}] (5) {};
 \node[element,above right of=5,label=left: {\small 3}] (3) {};
 \node[element,below right of=3,label=right: {\small 6}] (6) {};
 \node[element,above right of=6,label=right: {\small 4}] (4) {};
 \node[element,above left of=3,label=left: {\small 1}] (1) {};
 \node[element,above left of=4,label=right: {\small 2}] (2) {};
 \draw (1) -- (3) -- (5);
 \draw (2) -- (4) -- (6);
 \draw (2) -- (3) -- (6);
 \node[below of=3,yshift=-0.1cm] (P) {$\Pb$};
\end{scope}
\begin{scope}[xshift=5.5cm,yshift=-0.4cm,node distance=1.125cm]
  \node[element,label=left: {\small 531}] (531) {};
  \node[element,right of=531,label=left: {\small 532}] (532) {};
  \node[element,right of=532,label=right: {\small 631}] (631) {};
  \node[element,right of=631,label=right: {\small 632}] (632) {};
  \node[element,right of=632,label=right: {\small 642}] (642) {};
  \node[element,above of=532,label=left: {\small 31}] (31) {};
  \node[element,above of=631,label=left: {\small 32}] (32) {};
  \node[element,above of=642,label=right: {\small 42}] (42) {};
  \node[element,above of=31,label=left: {\small 1}] (1f) {};
  \node[element,above of=42,xshift=-1.125cm,label=right: {\small 2}] (2f) {};
  \draw (1f) -- (31) -- (531);
  \draw (31) -- (631);
  \draw (2f) -- (42) -- (642);
  \draw (2f) -- (32) -- (532);
  \draw (32) -- (632);
  \node[right of=642,xshift=0.5cm] (S) {$\F$};
  \begin{pgfonlayer}{background}
    \node[potato,fit=(1f),inner xsep=14pt,inner ysep=10pt,xshift=-3pt] {};
    \node[potato,fit=(2f),inner xsep=14pt,inner ysep=10pt,xshift=3pt] {};
    \node[potato,fit=(31)(32),inner xsep=16pt,inner ysep=10pt,xshift=-5pt] {};
    \node[potato,fit=(42),inner xsep=16pt,inner ysep=10pt,xshift=5pt] {};
    \node[potato,fit=(531)(532),inner xsep=18pt,inner ysep=10pt,xshift=-8pt] {};
    \node[potato,fit=(631)(642),inner xsep=18pt,inner ysep=10pt,xshift=8pt] {};
  \end{pgfonlayer}
\end{scope}
\node[right of=2,xshift=0.5cm] (b) {};
\node[right of=b,xshift=1cm] (a) {};
\path[->] (a) edge [bend right] node[above] {$\phi$} (b);
\end{tikzpicture}
\caption{The covering forest~$\F$ of an ordered set~$\Pb$.}\label{fig: P to S}
\end{figure}

We now begin to consider the relationship between the covering forest~$\F$
of a finite ordered set~$\Pb$ and the universal covering tree from graph theory.

\begin{definition}
First define the \emph{lower-neighbourhood} of an element~$a$ of a finite ordered
set~$\A$ to be the set
\[
 N_a := \{\, c\in A \mid c \prec a \,\}.
\]
For finite ordered sets $\A$ and $\B$, we will say that $\gamma \colon A \to B$
is a \emph{covering map} if $\gamma$~restricts to a bijection
on the maximals and on lower-neighbourhoods. That~is:
\begin{enumerate}[itemsep=0.5ex]
\item $\gamma \restrictedto {\Max(\A)} \colon \Max(\A) \to \Max(\B)$ is a bijection, and
\item $\gamma \restrictedto {N_a} \colon N_a \to N_{\gamma(a)}$ is a bijection, for all $a\in A$.
\end{enumerate}
\end{definition}

\begin{remark}
This is analogous to the definitions of covering map for connected graphs and for
connected directed graphs; see \cite{JS,DHM}. Our condition~(1) replaces the condition
that $\gamma$ is surjective. In our condition~(2), we use `lower-neighbourhood' instead
of `neighbourhood' in the case of graphs, and instead of `in-neighbourhood' and
`out-neighbourhood' in the case of directed graphs.

Covering maps for ordered sets have been considered by Hoffman~\cite{MH}
in a setting that is both more special than ours (ranked ordered sets)
and more general (covers in the ordered sets are assigned positive integer weights).
\end{remark}

Before stating some useful properties of covering maps, we require a definition.

\begin{definition}
Let $\A$ and $\B$ be ordered sets. We shall say that an order-preserving map
$\alpha \colon \A \to \B$ is a \emph{quotient map} if, for all $b_1 \le b_2$ in~$\B$,
there exists $a_1\le a_2$ in~$\A$ such that $\alpha(a_1) = b_1$ and $\alpha(a_2) = b_2$.
Note that, because $\le$ is reflexive, a quotient map is necessarily surjective.
\end{definition}

\begin{lemma}\label{lem:covmaps}
Let $\gamma \colon A \to B$ be a covering map for finite ordered sets~$\A$ and\/~$\B$.
\begin{enumerate}[label={\upshape(\roman*)}]
\item The map $\gamma$ is cover-preserving \textup(and is therefore order-preserving\textup).
\item For each covering chain $b_1 \prec b_2 \prec \dots \prec b_n$ in~$\B$
  with $b_n\in \Max(\B)$, there exists a covering chain $a_1 \prec a_2 \prec \dots \prec a_n$ in~$\A$
  with $a_n\in \Max(\A)$ such that $\gamma(a_i) = b_i$, for all $i\in \{1, 2,  \dots, n\}$.
\item The map $\gamma$ is a quotient map \textup(and is therefore surjective\textup).
\end{enumerate}
\end{lemma}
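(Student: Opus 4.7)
The plan is to prove the three parts in the order stated: (i) is a direct translation of the definition of covering map, (ii) is a descending induction that uses both conditions (1) and (2) of the definition, and (iii) reduces to (ii) together with the finiteness of $\B$. For (i), if $a_1 \prec a_2$ in $\A$, then $a_1 \in N_{a_2}$, and condition (2) sends $\gamma(a_1)$ into $N_{\gamma(a_2)}$, so $\gamma(a_1) \prec \gamma(a_2)$ in $\B$. Hence $\gamma$ is cover-preserving, and since $\A$ is finite, every relation $x \le y$ in $\A$ decomposes as a finite chain of covers, delivering order-preservation for free.

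For (ii), I would set up a descending induction on $i \in \{n, n-1, \dots, 1\}$. The base case $i = n$ is handled by condition (1): as $b_n \in \Max(\B)$, there is a (unique) $a_n \in \Max(\A)$ with $\gamma(a_n) = b_n$. For the inductive step, suppose $a_{i+1}$ has been chosen so that $\gamma(a_{i+1}) = b_{i+1}$. Then $b_i \prec b_{i+1} = \gamma(a_{i+1})$ puts $b_i \in N_{\gamma(a_{i+1})}$, and condition (2) applied to $\gamma\restrictedto{N_{a_{i+1}}}$ provides a unique $a_i \in N_{a_{i+1}}$ with $\gamma(a_i) = b_i$. By definition of $N_{a_{i+1}}$, we have $a_i \prec a_{i+1}$, so the resulting chain $a_1 \prec a_2 \prec \dots \prec a_n$ in $\A$ ends at a maximal element and lies above the given chain.

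For (iii), given $b_1 \le b_2$ in $\B$, I would first use finiteness of $\B$ to extend the relation to a covering chain $b_1 = c_1 \prec c_2 \prec \dots \prec c_k = b_2$, and then extend upward from $c_k$ to a maximal element, producing $b_1 = c_1 \prec \dots \prec c_k = b_2 \prec c_{k+1} \prec \dots \prec c_m$ with $c_m \in \Max(\B)$. Applying (ii) lifts this to a covering chain $a_1 \prec a_2 \prec \dots \prec a_m$ in $\A$ with $\gamma(a_j) = c_j$ for every $j$; in particular $a_1 \le a_k$, $\gamma(a_1) = b_1$ and $\gamma(a_k) = b_2$, which is exactly the quotient property. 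Surjectivity is the special case $b_1 = b_2$.

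I do not anticipate a serious obstacle; the only minor subtlety is a degenerate case in (iii) when $b_2 \in \Max(\B)$ (no upward extension needed) or when $b_1 = b_2$ (trivial covering chain of length $0$), but these cause no real difficulty. The substantive step is the descending induction in (ii), in which conditions (1) and (2) of the definition combine in a completely complementary way: (1) supplies the starting maximal element and (2) propagates the lift one cover at a time down the chain.
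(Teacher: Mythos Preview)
Your proof is correct and matches the paper's approach essentially line for line: part~(i) is immediate from condition~(2), part~(ii) is an induction with condition~(1) supplying the base case at the maximal element and condition~(2) the inductive step, and part~(iii) follows by extending $b_1 \le b_2$ to a covering chain reaching a maximal element and invoking~(ii). The paper phrases the induction in~(ii) as induction on the chain length~$n$ rather than your descending induction on~$i$, but this is a cosmetic difference only.
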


\begin{proof}
Part~(i) is trivial, and part~(ii) is an easy induction: condition~(1) in the definition of
a covering map gets the induction started at $n=1$, and condition~(2) yields the inductive step.
Part~(iii) follows directly from part~(ii): let $x\le y$ in $\B$ and apply~(ii) to a covering chain
in~$\B$ that starts at~$x$, passes through~$y$ and ends at a maximal element of~$\B$.
\end{proof}

The next result follows almost immediately from the construction of the covering forest,
together with part~(iii) of the previous lemma.

\begin{lemma}\label{lem: covering}
Let\/ $\F$ be the covering forest of a finite ordered set\/~$\Pb$.
Then\/ $\phi \colon \F \to \Pb$ is a covering map \textup(and is therefore a quotient map\textup).
\end{lemma}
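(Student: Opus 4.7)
The plan is to verify the two defining conditions of a covering map directly from Definition 3.1, by reading off the information contained in the word-suffix order on $\F$.

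First I would unpack the order: since $w \le v$ in $\F$ iff $v$ is a suffix of $w$, the strict successors of a word are exactly its proper suffixes. Using this, the maximal elements of $\F$ are precisely the one-letter words, and by the definition of $F$ these are exactly the elements of $\Max(\Pb)$ sitting inside $P^*$. Since $\phi$ acts on a one-letter word $a$ as $\phi(a) = a$, the restriction $\phi \restrictedto {\Max(\F)}$ is literally the identity map from $\Max(\Pb)$ to $\Max(\Pb)$, which establishes condition~(1) of the covering map definition.

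Next I would compute the lower-neighbourhood of an arbitrary word $w = a_1 a_2 \dots a_n \in F$. Any element $c$ with $c < w$ has $w$ as a proper suffix, so $c = ua_1 a_2\dots a_n$ for some non-empty $u \in P^*$; the cover relation $c \prec w$ forces $u$ to have length exactly one, so $c = a_0 a_1 \dots a_n$ with $a_0 \prec a_1$ in $\Pb$. Conversely, any $a_0 \in P$ with $a_0 \prec a_1$ produces a legitimate element $a_0 a_1 \dots a_n$ of $F$ (the covering chain extends and the final letter stays in $\Max(\Pb)$) that is covered by $w$. Therefore $N_w = \{\, a_0 a_1 \dots a_n \mid a_0 \prec a_1 \text{ in } \Pb \,\}$, and $\phi$ acts on it by $a_0 a_1 \dots a_n \mapsto a_0$, which is manifestly a bijection onto $\{\, a_0 \in P \mid a_0 \prec a_1 \,\} = N_{\phi(w)}$. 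That verifies condition~(2).

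Once both conditions are in hand, $\phi$ is a covering map, and the quotient-map half of the conclusion follows immediately by invoking Lemma~\ref{lem:covmaps}(iii). The only step requiring genuine care is the cover analysis in paragraph two: I must show that \emph{exactly} length-one prefix extensions produce covers, so that prepending a longer word gives strictly lower elements but not covers. Beyond that, the whole argument is a direct translation of the word construction, and there is no real obstacle.
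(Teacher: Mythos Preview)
Your proposal is correct and matches the paper's approach exactly: the paper simply asserts that the result ``follows almost immediately from the construction of the covering forest, together with part~(iii) of the previous lemma,'' and your argument spells out precisely those immediate verifications of conditions~(1) and~(2) before invoking Lemma~\ref{lem:covmaps}(iii).
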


\begin{remark}\label{rem: cover}
It can be shown that the covering forest~$\F$ from Definition~\ref{def: s}
is the unique forest (up to isomorphism) with a covering map to~$\Pb$.
Moreover, the covering map $\phi \colon \F \to \Pb$ is the
\emph{universal cover} of~$\Pb$: for every covering map $\gamma \colon \A \to \Pb$,
there is a (necessarily unique) order-preserving map $\alpha \colon \F \to \A$
with $\phi = \gamma \circ \alpha$.
\end{remark}

We now establish some properties of the covering forest $\F$ that will be used in
the next section. In particular, we will be using the fact that, if two elements
of~$F$ are identified by the covering map~$\phi$, then the corresponding
principal down-sets of~$\F$ are order-isomorphic.

\begin{definition}\label{def: psi}
Let $\F$ be the covering forest of a finite ordered set~$\Pb$.
Then, since $\phi \colon \F \to \Pb$ is a covering map, for each $x \in F$,
we can define the bijection $\psi_x  \colon N_{\phi(x)} \to N_x$
to be the inverse of the bijection $\phi \restrictedto {N_x} \colon N_x \to N_{\phi(x)}$.
\end{definition}

\begin{lemma}\label{lem: mu}
Let\/ $\F$ be the covering forest of a finite ordered set\/~$\Pb$,
and let $u, v \in F$ with $\phi(u) = \phi(v)$. Then there is an order-isomorphism
$\mu \colon \dnarrow u \to \dnarrow v$ such that
\begin{enumerate}[label={\upshape(\roman*)}]
\item $\phi\circ \mu = \phi \restrictedto {\dnarrow u}$, and
\item $\mu \circ \psi_x = \psi_{\mu(x)}$, for all $x \in \dnarrow u$.
\end{enumerate}
\end{lemma}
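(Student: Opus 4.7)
The plan is to define $\mu$ explicitly by ``tail substitution'' on the words of $F$, and then verify all the required properties by direct inspection. Write $u = a_1 a_2 \cdots a_n$ and $v = a_1 b_2 \cdots b_m$; they share their first letter since $a_1 = \phi(u) = \phi(v)$. By the definition of $\F$, each element of $\dnarrow u$ has $u$ as a final segment, so it has the unique form $xu$ (concatenation) for some $x \in P^*$ for which $xu$ is a covering chain ending at a maximal element. I will set
\[
 \mu(xu) := xv.
\]

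First I would check that $\mu$ is a well-defined order-isomorphism $\dnarrow u \to \dnarrow v$. Writing $x = c_1 \cdots c_k$, the condition $xu \in F$ amounts to $c_1 \prec c_2 \prec \cdots \prec c_k \prec a_1$; since $v$ is itself a covering chain starting at $a_1$ and ending at a maximal element of~$\Pb$, the concatenation $xv$ is also a valid covering chain in~$F$ with $v$ as a final segment, so indeed $\mu(xu) \in \dnarrow v$. The analogous map obtained by exchanging the roles of $u$ and $v$ is an inverse to $\mu$. Order-preservation is automatic from the definition of the order on $\F$: the relation $x_1 u \le x_2 u$ in $\dnarrow u$ means that $x_2 u$ is a final segment of $x_1 u$, equivalently that $x_2$ is a final segment of $x_1$, which then yields $x_2 v$ as a final segment of $x_1 v$, i.e.\ $\mu(x_1 u) \le \mu(x_2 u)$.

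Finally, I would verify properties (i) and (ii) by direct computation. For (i): the first letter of $xu$ equals the first letter of $xv$ (either the first letter of $x$ when $x$ is nonempty, or else $a_1$ when $x$ is empty), so $\phi(\mu(xu)) = \phi(xu)$. For (ii), unwinding Definition~\ref{def: psi} shows that $\psi_y(c) = cy$ (prepend the letter $c$) for every $y \in F$ and every $c \prec \phi(y)$ in~$\Pb$. Thus for $x = x_0 u \in \dnarrow u$ and $c \in N_{\phi(x)}$, both
\[
 \mu(\psi_x(c)) = \mu(c x_0 u) = c x_0 v \quad \text{and} \quad \psi_{\mu(x)}(c) = c\,\mu(x) = c x_0 v
\]
coincide. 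There is no real obstacle here: the whole argument is a bookkeeping exercise in the word structure of~$F$, the key observation being that the order on~$\F$, the map~$\phi$ and the bijections~$\psi_x$ all depend only on the leading portion of a word, so exchanging the trailing block $u$ for the trailing block $v$ leaves everything compatible.
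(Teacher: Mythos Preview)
Your proof is correct and follows essentially the same approach as the paper: the paper also defines $\mu(su) := sv$ and verifies the properties by direct inspection of the word structure. The only minor difference is in the verification of~(ii), where you compute $\psi_y(c) = cy$ explicitly and compare both sides, while the paper derives~(ii) more formally from~(i) by inverting the relation $\phi\restrictedto{N_{\mu(x)}} \circ \mu\restrictedto{N_x} = \phi\restrictedto{N_x}$.
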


\begin{proof}
Recall that the elements of~$F$ are covering chains in $\Pb$ that reach a maximal,
and that each element of~$\dnarrow u$ is of the form~$su$, for some $s \in P^*$.
We want to define the map $\mu \colon \dnarrow u \to \dnarrow v$ by
\[
\mu(su) := sv
\]
for all $su \in \dnarrow u$. Since $\phi(u) = \phi(v)$, the covering chains $u$ and $v$ start
at the same element of~$P$, so it follows that $sv \in F$, as required.

To see that $\mu$ is order-preserving, let $y \le z$ in $\dnarrow u$. Then $z = su$,
for some $s \in P^*$, and $y = tsu$, for some $t \in P^*$. Thus $\mu(y) = tsv \le sv = \mu(z)$,
whence $\mu$ is order-preserving. By symmetry, there is an order-preserving map
$\nu \colon \dnarrow v \to \dnarrow u$ such that $\nu \circ \mu = \id_{\dnarrow u}$
and $\mu \circ \nu = \id_{\dnarrow v}$.
Therefore $\mu$ and $\nu$ are mutually inverse order-isomorphisms.

For~(i), let $x \in \dnarrow u$. Then $x = su$ and $\mu(x) = sv$, for some $s \in P^*$.
If $s$ is not the empty word, then clearly $\phi(\mu(x)) = \phi(x)$. If $s$ is the empty word,
then we still have $\phi(\mu(x)) = \phi(x)$, since $\phi(u) = \phi(v)$.

For~(ii), let $x \in \dnarrow u$ and define $y := \mu(x)$. Since $\mu$
is an order-isomorphism, we have $\mu(N_x) = N_y$. Using~(i), we obtain
$\phi\restrictedto {N_y}\circ \mu \restrictedto {N_x} = \phi \restrictedto {N_x}$.
Therefore
\[
 \psi_x = (\phi \restrictedto {N_x})^{-1}
   = (\phi\restrictedto {N_y}\circ \mu \restrictedto {N_x})^{-1}
   = (\mu \restrictedto {N_x})^{-1} \circ (\phi\restrictedto {N_y})^{-1}
   = (\mu \restrictedto {N_x})^{-1} \circ \psi_y,
\]
from which (ii) follows easily.
\end{proof}

\section{Obtaining each finite distributive lattice}\label{sec: L to Q}

In Section~\ref{sec: Q to L} we saw that, for each quasi-primal algebra $\Q$, the homomorphism
lattice $\Lb_\Q$ is a finite distributive lattice. In this section, we prove that all
finite distributive lattices arise in this way.

\begin{theorem}\label{th: PtoQ}
For each finite distributive lattice $\Lb$, there exists a quasi-primal algebra $\Q$
such that\/ $\Lb_\Q$ is isomorphic to $\Lb$.
\end{theorem}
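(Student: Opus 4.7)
By Birkhoff's representation theorem, any finite distributive lattice $\Lb$ has the form $\downsets \Pb$ where $\Pb$ is the ordered set of its join-irreducibles. In view of Theorem~\ref{th: iso}(ii), it therefore suffices to exhibit a quasi-primal algebra $\Q$ with a trivial subalgebra such that, after removing the top of $\Sub(\Q)/{\equiv}$, the remaining ordered set is isomorphic to $\Pb$. Applying Theorem~\ref{th: iso}(ii) will then yield $\Lb_\Q \cong \downsets \Pb \cong \Lb$.

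The plan is to build $\Q$ from the covering forest $\F$ of $\Pb$ developed in Section~\ref{sec: tree}. Concretely, I would take the universe of $\Q$ to be $F$ augmented by a distinguished element $t$ that is fixed by every operation (and so yields the trivial subalgebra $\{t\}$), possibly with each $u \in F$ doubled so that no element of $F$ is itself a fixed point of every operation. For each $p \in P$, I would include a unary operation $g_p$ that, via the bijection $\psi_x \colon N_{\phi(x)} \to N_x$ from Definition~\ref{def: psi}, moves $x$ to the unique lower cover of $x$ with $\phi$-value $p$ when such a cover exists, so that the subalgebra generated by any $u \in F$ is exactly $\dnarrow u$. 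To enforce quasi-primality and to trim the collection of subalgebras down to the intended ones, I would invoke Pixley--Werner theory: take $\Q$ to be the algebra of all finitary operations on the universe that preserve the prescribed family of subalgebras together with the isomorphisms $\mu \colon \dnarrow u \to \dnarrow v$ supplied by Lemma~\ref{lem: mu} whenever $\phi(u) = \phi(v)$. Since the ternary discriminator preserves any specified subsets and partial isomorphisms, this automatically makes $\Q$ quasi-primal.

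Two verifications then remain. The first is that the $\equiv$-classes of subalgebras of $\Q$ not containing $t$ are indexed by $P$; this follows from the algebra isomorphism between the subalgebras corresponding to $u$ and $v$ with $\phi(u) = \phi(v)$, which is the algebraic counterpart of the order-isomorphism $\mu$ from Lemma~\ref{lem: mu}. The second, more delicate, verification is that the order $\to$ on these classes matches $\Pb$. The forward direction is straightforward: given $\phi(u) \le \phi(v)$ in $\Pb$, Lemma~\ref{lem:covmaps}(iii) lifts this to a pair $u' \le v'$ in $\F$ with the correct $\phi$-values, and then the inclusion $\dnarrow u' \subseteq \dnarrow v'$ composes with the Lemma~\ref{lem: mu} isomorphisms to give the required homomorphism. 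The reverse direction is the main obstacle: one must rule out unintended homomorphisms between template subalgebras. Here quasi-primality pays off---every non-trivial subalgebra of $\Q$ is simple by Theorem~\ref{th: simple}, so any homomorphism between template subalgebras is either constant or injective. The swap operations eliminate trivial subalgebras from the templates (ruling out the constant case), and the action of the $g_p$ on an injective homomorphism can be tracked to produce the required order relation in $\Pb$. A final appeal to Theorem~\ref{th: iso}(ii) then gives $\Lb_\Q \cong \Lb$, completing the proof.
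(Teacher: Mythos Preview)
Your overall strategy is the paper's: build a quasi-primal $\Q$ with a trivial subalgebra so that $\langle \Sub(\Q)/{\equiv};\to\rangle$ is $\Pb$ with a new top, then invoke Theorem~\ref{th: iso}(ii). The construction differs in detail. The paper works not with the covering forest of $\Pb$ but with the covering \emph{tree} $\Sb$ of $\Pb^\top$ (so there is a global join $\vee$), then augments to $\Sb^\sharp$ by adding an extra leaf below each minimal (this is its version of your ``doubling''). On $S^\sharp$ it places explicit operations---$\vee$; a family $F$ of ``descend to the specified lower cover'' maps; a family $G$ that pushes the new leaves back up; a binary $h$ that forces any subuniverse containing $\top$ to be all of $Q$; and $\tau$---and then proves directly (Lemmas~\ref{lem: subuniverse}--\ref{lem: eta SOP}) that the subuniverses are exactly $\{\top\}$ and the $\dnarrow u$ for $u\in S$, and that $\Q_u\cong\Q_v$ precisely when $\phi(u)=\phi(v)$.

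The genuine gap in your sketch is the appeal to Pixley--Werner. Defining $\Q$ as the algebra of \emph{all} operations preserving the prescribed subsets and the $\mu$'s guarantees that those subsets \emph{are} subalgebras and the $\mu$'s \emph{are} isomorphisms, but not that these are the \emph{only} ones. The subalgebra side is manageable (your family is intersection-closed in a forest and stable under the $\mu$'s), but the isomorphism side is exactly the crux of the reverse order check: from an embedding $\Q_u\hookrightarrow\Q_v$ you need $\phi(u)\le\phi(v)$, which amounts to knowing that every isomorphism onto a subalgebra of $\Q_v$ arises from your $\mu$'s. That is a statement about the binary part of the relational clone generated by your data, and it is not automatic. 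The paper sidesteps this entirely by having $\vee$ among the basic operations: any isomorphism is then an order-isomorphism, so it must send the maximum $u$ of its domain to the maximum of its image, after which applying a single $F$-operation to that maximum pins down $\phi(u)$ (see the first paragraph of the proof of Lemma~\ref{lem: iff phi}). Since your forest has no global join, you need an alternative device to locate where an embedding sends the top of its domain; ``track the action of the $g_p$'' does not yet supply one.
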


We start by proving a special case of a result of Birkhoff and Frink~\cite{BF}, since it
serves as motivation for the approach used in the proof of Theorem~\ref{th: PtoQ}.
We see that, given a finite join-semilattice~$\Sb$, we can construct an algebra~$\A$
on the same universe as~$\Sb$ such that the subalgebras of~$\A$ correspond to the principal
ideals of~$\Sb$. Recall that we use $\Sub(\A)$ to denote the set of all subalgebras of~$\A$.

\begin{lemma}[Birkhoff and Frink~\cite{BF}]\label{lem: frink}
For each finite join-semilattice $\Sb$, there exists a finite algebra $\A$ such that
$\Sub(\A)$ is order-isomorphic to~$\Sb$, where $\Sub(\A)$ is ordered by inclusion.
\end{lemma}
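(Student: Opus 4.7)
The plan is to realise~$\Sb$ as the subalgebra lattice of a carefully chosen finite algebra~$\A$ on the universe~$S$. I will arrange matters so that the (non-empty) subalgebras of~$\A$ are exactly the principal down-sets $\dnarrow s$ of~$\Sb$; since the map $s\mapsto{\dnarrow s}$ is an order-isomorphism from~$\Sb$ onto the set of principal down-sets ordered by inclusion, this suffices.

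The algebra~$\A$ on~$S$ will carry two kinds of operations. First, the binary join~$\vee$ of~$\Sb$, whose role is to guarantee that every finite non-empty subalgebra~$T$ possesses a greatest element $m:=\bigvee T$, and hence satisfies $T\subseteq{\dnarrow m}$. Second, for each pair $b<a$ in~$\Sb$, a unary ``projection'' $f_{a,b}\colon S\to S$ defined by
\[
 f_{a,b}(x):=\begin{cases} b & \text{if } x\ge a,\\ x & \text{otherwise.}\end{cases}
\]
These unary operations are designed precisely to force every subalgebra to be down-closed below its maximum.

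The verification splits into two routine checks. Each $\dnarrow s$ is closed under~$\vee$ by the semilattice axioms, and under every~$f_{a,b}$ by a short case argument: if $x\in{\dnarrow s}$ and $x\ge a$ then $a\le s$, and so $f_{a,b}(x)=b<a\le s$, while otherwise $f_{a,b}(x)=x\in{\dnarrow s}$. Conversely, if $T\in\Sub(\A)$ is non-empty with maximum~$m$, then $T\subseteq{\dnarrow m}$, and applying $f_{m,b}$ to $m\in T$ delivers every $b<m$ into~$T$, so $T={\dnarrow m}$. The map $s\mapsto{\dnarrow s}$ is thus a bijection $S\to\Sub(\A)$ which is plainly order-preserving and order-reflecting.

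I anticipate no serious obstacle, since the construction is essentially a template. The only subtlety concerns the empty set: if it counts as a subalgebra, $\Sub(\A)$ would acquire a spurious bottom below all of the $\dnarrow s$. Under the convention that subalgebras are non-empty this does not arise; alternatively, if~$\Sb$ has a bottom, adding it to the signature as a nullary operation eliminates~$\emptyset$ from~$\Sub(\A)$ without affecting any of the~$\dnarrow s$.
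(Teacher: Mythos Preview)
Your proposal is correct and follows essentially the same template as the paper: build $\A$ on the set~$S$ using the join~$\vee$ together with unary ``step-down'' operations so that the subalgebras are exactly the principal down-sets of~$\Sb$. The only cosmetic differences are that the paper restricts to covering pairs $s\prec t$ and defines $f_{ts}$ to change value only at the single point~$t$ (rather than on all of~$\uarrow a$), and the paper leaves the empty-subalgebra issue implicit while you address it explicitly.
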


\begin{proof}
Let $\Sb = \langle S; \vee \rangle$ be a finite semilattice.
For each covering pair $s \prec t$ in $\Sb$, define the unary operation
$f_{ts} \colon S \to S$ by
\[
f_{ts}(x) :=
 \begin{cases}
 s &\text{if $x = t$,}\\
 x &\text{otherwise.}
 \end{cases}
\]
Now define $F := \{\, f_{ts} \mid s \prec t \text{ in } \Sb \,\}$ and $\A := \langle S; \vee, F \rangle$.

Since $\Sb$ is finite, a subset $B$ of $A$ is closed under all $f_{ts} \in F$ if and only if
$B$ is a down-set. It follows that the subalgebras of~$\A$ correspond precisely
to the ideals of~$\Sb$. For each $x \in S$, let $\A_x$ denote the subalgebra of~$\A$
with universe~$\dnarrow x$. Then, again since $\Sb$ is finite, we have
$\Sub(\A) = \{\, \A_x \mid x \in S \,\}$. We can now define the order-isomorphism
$\alpha \colon \Sb \to \Sub(\A)$ by $\alpha(x):= \A_x$.
\end{proof}

For our result, we are starting with a finite distributive lattice $\Lb$. By Birkhoff's
Representation Theorem (see~\cite{DP:ilo}), there is a finite ordered set $\Pb$ such that
$\Lb$ is isomorphic to the lattice $\downsets \Pb$ of all down-sets of~$\Pb$.
Let $\Pb^\top$ denote the ordered set obtained from~$\Pb$ by adding a new top element~$\top$.

We want to construct a quasi-primal algebra $\Q$, with a trivial subalgebra, such that
$\langle \Sub(\Q)/{\equiv}; \to \rangle$ is isomorphic to~$\Pb^\top$.
By Theorem~\ref{th: iso}(ii), we will then have
\[
\Lb_\Q \cong \downsets {\overline {\Pb^\top}} = \downsets \Pb \cong \Lb,
\]
as required.

To mimic the construction from the proof of the previous lemma, the general idea
is to build the algebra $\Q$ on a semilattice $\Sb$ that will
correspond to $\langle \Sub(\Q) ; \subseteq \rangle$. There is an order-preserving map from
$\langle \Sub(\Q) ; \subseteq \rangle$ onto $\langle \Sub(\Q)/{\equiv} ; \to \rangle$.
Therefore, we need an order-preserving map from the semilattice $\Sb$ onto~$\Pb^\top$.
We will take $\Sb$ to be the covering tree of~$\Pb^\top$, as given in Definition~\ref{def: s}.

Note that this general idea will need tweaking to avoid problems with trivial subalgebras.

\begin{assumptions}\label{setup}
Throughout the rest of this section, we fix a non-trivial finite distributive lattice~$\Lb$,
with $\Lb \cong \downsets \Pb$. Let~$\Pb^\top$ be the ordered set obtained from~$\Pb$
by adding a new top element~$\top$, and let $\Sb$ be the covering tree of~$\Pb^\top$.
By Lemma~\ref{lem: covering}, we have a quotient map $\phi \colon \Sb \to \Pb^\top$.
Note that $\top$ is also the top element of~$\Sb$.
\end{assumptions}

We will construct a quasi-primal algebra $\Q$ such that there is a quotient map
$\eta \colon \Sb \to \langle \Sub(\Q)/{\equiv}; \to \rangle$ with $\ker(\eta) = \ker(\phi)$.
It will then follow that the two ordered sets $\Pb^\top$ and $\langle \Sub(\Q)/{\equiv}; \to \rangle$
are isomorphic, as required.

To avoid problems with trivial subalgebras, we will not use the universe of~$\Sb$
as the universe of~$\Q$, but will make a slight modification.

\begin{definition}
Let $\Pb^\sharp$ be the ordered set obtained from~$\Pb^\top$ by adding a
new minimal element~$m'$ below each minimal element~$m$ of~$\Pb^\top$,
so that $m$ is the unique upper cover of~$m'$.

Now let $\Sb^\sharp$ be the covering tree of~$\Pb^\sharp$.
By Lemma~\ref{lem: covering}, we have a covering map
$\phi^\sharp \colon \Sb^\sharp \to \Pb^\sharp$.
See Figure~\ref{fig: aug tree} for an example of this construction,
where $\Pb$ is the six-element ordered set from Figure~\ref{fig: P to S}.

We can easily see that $\Sb$ is a subordered set of~$\Sb^\sharp$, with
\[
 S = S^\sharp \setminus \Min(\Sb^\sharp)
 \quad\text{and}\quad
 \phi = \phi^\sharp \restrictedto S.
\]
Since $\Sb^\sharp$ is a finite tree, each minimal element~$x$ of~$\Sb^\sharp$
has a unique upper cover, which we denote by $x^{\uparrow}$.
\end{definition}

\begin{figure}[t]
\begin{tikzpicture}[node distance=1cm]
\begin{scope}[node distance=1cm]
 \node[element,label=left: {\small 5}] (5) {};
 \node[element,above right of=5,label=left: {\small 3}] (3) {};
 \node[element,below right of=3,label=right: {\small 6}] (6) {};
 \node[element,above right of=6,label=right: {\small 4}] (4) {};
 \node[element,above left of=3,label=left: {\small 1}] (1) {};
 \node[element,above left of=4,label=right: {\small 2}] (2) {};
 \node[shaded element,above right of=1,label=right: {\small $0 = \top$}] (0) {};
 \node[shaded element,below of=5,label=left: {\small 5$'$}] (5p) {};
 \node[shaded element,below of=6,label=right: {\small 6$'$}] (6p) {};
 \draw (0) -- (1) -- (3) -- (5) -- (5p);
 \draw (0) -- (2) -- (4) -- (6) -- (6p);
 \draw (2) -- (3) -- (6);
 \node[right of=6p,xshift=0.125cm] (P) {$\Pb^\sharp$};
\end{scope}
\begin{scope}[xshift=4.75cm,yshift=-0.55cm,node distance=1.125cm,label distance=-1pt]
  \node[element,label=left: {\small 5310}] (531) {};
  \node[element,right of=531,xshift=0.25cm,label=left: {\small 5320}] (532) {};
  \node[element,right of=532,xshift=0.25cm,label=right: {\small 6310}] (631) {};
  \node[element,right of=631,xshift=0.25cm,label=right: {\small 6320}] (632) {};
  \node[element,right of=632,xshift=0.25cm,label=right: {\small 6420}] (642) {};
  \node[element,above of=532,label=left: {\small 310}] (31) {};
  \node[element,above of=631,label=left: {\small 320}] (32) {};
  \node[element,above of=642,label=right: {\small 420}] (42) {};
  \node[element,above of=31,label=left: {\small 10}] (1f) {};
  \node[element,above of=42,xshift=-1.25cm,label=right: {\small 20}] (2f) {};
  \node[element,below of=531,label=left: {\small 5$'$5310}] (531p) {};
  \node[element,below of=532,label=left: {\small 5$'$5320}] (532p) {};
  \node[element,below of=631,label=right: {\small 6$'$6310}] (631p) {};
  \node[element,below of=632,label=right: {\small 6$'$6320}] (632p) {};
  \node[element,below of=642,label=right: {\small 6$'$6420}] (642p) {};
  \node[element,above of=32,yshift=0.75cm,label=above: {\small $0 = \top$}] (0f) {};
  \draw (0f) -- (1f) -- (31) -- (531) -- (531p);
  \draw (31) -- (631) -- (631p);
  \draw (0f) -- (2f) -- (42) -- (642) -- (642p);
  \draw (2f) -- (32) -- (532) -- (532p);
  \draw (32) -- (632) -- (632p);
  \node[right of=642,xshift=0.325cm,yshift=-0.45cm] (S) {$\Sb^\sharp$};
  \begin{pgfonlayer}{background}
    \node[potato,fit=(0f),inner xsep=19pt,inner ysep=10pt,yshift=3pt] {};
    \node[potato,fit=(1f),inner xsep=14pt,inner ysep=10pt,xshift=-3pt] {};
    \node[potato,fit=(2f),inner xsep=14pt,inner ysep=10pt,xshift=3pt] {};
    \node[potato,fit=(31)(32),inner xsep=16pt,inner ysep=10pt,xshift=-5pt] {};
    \node[potato,fit=(42),inner xsep=16pt,inner ysep=10pt,xshift=5pt] {};
    \node[potato,fit=(531)(532),inner xsep=18pt,inner ysep=10pt,xshift=-8pt] {};
    \node[potato,fit=(631)(642),inner xsep=18pt,inner ysep=10pt,xshift=8pt] {};
    \node[potato,fit=(531p)(532p),inner xsep=22pt,inner ysep=10pt,xshift=-12pt] {};
    \node[potato,fit=(631p)(642p),inner xsep=22pt,inner ysep=10pt,xshift=12pt] {};
  \end{pgfonlayer}
\end{scope}
\node[right of=2,xshift=0.5cm] (b) {};
\node[right of=b,xshift=1cm] (a) {};
\path[->] (a) edge [bend right] node[above] {$\phi^\sharp$} (b);
\end{tikzpicture}
\caption{The covering tree~$\Sb^\sharp$ of the ordered set~$\Pb^\sharp$.}\label{fig: aug tree}
\end{figure}

We will base our quasi-primal algebra $\Q$ on the semilattice $\Sb^\sharp$ and adapt
the approach used in the proof of Lemma~\ref{lem: frink}. We want to ensure that the
non-trivial subalgebras of~$\Q$ correspond to the non-trivial principal ideals
of~$\Sb^\sharp$. However, we also need to ensure that two such subalgebras are
homomorphically equivalent if their maximum elements are identified by~$\phi$.

\begin{definition}
To define the quasi-primal algebra $\Q$ on the universe~$S^\sharp$, we first define
some families of operations on $S^\sharp$. Recall from Definition~\ref{def: psi} that,
since $\phi^\sharp \colon \Sb^\sharp \to \Pb^\sharp$ is a covering map, for each $x \in S^\sharp$,
we let $\psi_x  \colon N_{\phi^\sharp(x)} \to N_x$ denote the inverse of the
bijection $\phi^\sharp \restrictedto {N_x} \colon N_x \to N_{\phi^\sharp(x)}$.

\begin{itemize}[leftmargin=2em,itemsep=1ex]
\item[$F$:]
These operations will ensure that the subuniverses of~$\Q$ not containing~$\top$
are down-sets. For all $a \prec b$ in~$\Pb^\sharp$ with $b \ne \top$, define
$f_{ba} \colon S^\sharp \to S^\sharp$ by
\[
f_{ba}(x) :=
 \begin{cases}
 \psi_x(a) &\text{if $\phi^\sharp(x) = b$,}\\
 x &\text{otherwise.}
 \end{cases}
\]
Let $F := \bigl\{\, f_{ba} \mid a \prec b \text{ in } P^\sharp\notop \,\bigr\}$.

\item[$G$:]
These next operations will ensure that the minimal elements of~$\Sb^\sharp$
do not form trivial subalgebras of~$\Q$. For $m \in \Min(\Pb)$, define
$g_m \colon S^\sharp \to S^\sharp$ by
\[
g_m(x) :=
 \begin{cases}
 x^{\uparrow} &\text{if $\phi^\sharp(x) = m'$,}\\
 x &\text{otherwise.}
\end{cases}
\]
Let $G := \bigl\{\, g_m \mid m \in \Min(\Pb) \,\bigr\}$.

\item[$h$:]
Finally, this operation will ensure that the only non-trivial subuniverse of~$\Q$
containing $\top$ is $Q$ itself. Fix a cyclic permutation
$\lambda \colon S^\sharp \notop \to S^\sharp \notop$. Define the
binary operation $h \colon S^\sharp \times S^\sharp \to S^\sharp$ by
\[
h(x,y):=
 \begin{cases}
 \lambda(x) &\text{if $x \neq \top$ and $y = \top$,}\\
 x &\text{otherwise.}
 \end{cases}
\]
\end{itemize}
Let $\vee$ be the join-semilattice operation on the tree~$\Sb^\sharp$, and let $\tau$ denote
the ternary discriminator operation on~$S^\sharp$. We can now define the quasi-primal algebra
\[
\Q := \langle S^\sharp ; \vee, F, G, h, \tau \rangle.
\]
\end{definition}

\begin{note}\label{newnote}
We will use the fact that, for each covering pair $s \prec t$ in~$\Sb^\sharp$ with $t \ne \top$,
we have $f_{\phi^\sharp(t)\phi^\sharp(s)}(t) = s$ in~$\Q$. To see this, first note that,
since $\phi^\sharp$ is a covering map, we must have $\phi^\sharp(s) \prec \phi^\sharp(t)$
in~$\Pb^\sharp$ with $\phi^\sharp(t) \ne \top$. We now calculate
\[
 f_{\phi^\sharp(t)\phi^\sharp(s)}(t) = \psi_t\bigl(\phi^\sharp(s)\bigr)
 = \psi_t \circ \phi^\sharp\restrictedto {N_t}(s) = s,
\]
as required.
\end{note}

Throughout this section, we use $\dnarrow u$ to denote the down-set of $u$ in
the tree~$\Sb^\sharp$.

\begin{lemma}\label{lem: subuniverse}
The non-empty subuniverses of\/ $\Q$ are $\{\top\}$ and $\dnarrow u$, for all $u \in S$.
\end{lemma}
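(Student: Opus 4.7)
The plan is to verify that the sets $\{\top\}$ and $\dnarrow u$ (for $u\in S$) are subuniverses of $\Q$, and then show these are the only non-empty ones by case analysis on whether $\top\in B$.

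For the forward direction, I would check each operation. Since $\phi^\sharp(\top)=\top$ while every $b$ used in $F$ and every $m'$ in $G$ is different from $\top$, the operations $f_{ba}$ and $g_m$ fix $\top$; and $h(\top,\top)=\top$ falls in the `otherwise' branch. So $\{\top\}$ is a subuniverse. For $u\in S$, closure of $\dnarrow u$ under $\vee$ is immediate since $u$ is an upper bound of any two of its elements. Closure under $f_{ba}$ uses that $\psi_x(a)\prec x$ whenever $\phi^\sharp(x)=b$. Closure under $g_m$ reduces to showing that when $x\le u$ and $\phi^\sharp(x)=m'$, we have $x^{\uparrow}\le u$: both $x^{\uparrow}$ and $u$ lie in the chain $\uarrow x$, and since $u\in S$ is not minimal in $\Sb^\sharp$ we have $u\ne x$, forcing $x^{\uparrow}\le u$. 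The operation $h$ acts nontrivially only when its second argument is $\top$, which can occur inside $\dnarrow u$ only if $u=\top$, and then $\lambda(x)\in S^\sharp\notop\subseteq\dnarrow\top$. The discriminator $\tau$ returns an argument.

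For the converse, let $B$ be a non-empty subuniverse. If $\top\in B$ and $B\ne\{\top\}$, take $x\in B\setminus\{\top\}$; then $\lambda(x)=h(x,\top)\in B$, and iterating the cyclic permutation $\lambda$ on $S^\sharp\notop$ places every non-top element of $S^\sharp$ in $B$, giving $B=S^\sharp=\dnarrow\top$. If $\top\notin B$, then $B$ (finite and $\vee$-closed) has a maximum $u:=\bigvee B\in B$, and $u\ne\top$ since otherwise iterated joins inside the finite set $B$ would produce $\top\in B$. Hence $B\subseteq\dnarrow u$.

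It remains to check $u\in S$ and $\dnarrow u\subseteq B$. If instead $u\in\Min(\Sb^\sharp)$, say $\phi^\sharp(u)=m'$ for $m\in\Min(\Pb)$, then $g_m(u)=u^{\uparrow}>u$, contradicting $u=\max B$; so $u\in S$. To see $\dnarrow u\subseteq B$, take $y<u$ and a covering chain $y=y_0\prec y_1\prec\cdots\prec y_k=u$ in $\Sb^\sharp$. Each $y_i\le u<\top$, so $\phi^\sharp(y_i)\ne\top$ and $f_{\phi^\sharp(y_i)\phi^\sharp(y_{i-1})}$ lies in $F$; by Note~\ref{newnote} this operation sends $y_i$ to $y_{i-1}$. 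Descending from $u\in B$ along the chain, each $y_i\in B$, so $y\in B$. The main obstacle is the case $\top\notin B$: one must use the tree structure of $\Sb^\sharp$ to show that $B$ has a maximum $u$ lying in $S$ (using $G$ to rule out $u$ being minimal), after which the $F$ operations straightforwardly generate all of $\dnarrow u$ from $u$.
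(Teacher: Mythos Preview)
Your proof is correct and follows essentially the same approach as the paper's own proof: verify closure of the candidate sets under $\vee$, $F$, $G$, $h$, $\tau$, and for the converse use $\vee$ to extract a maximum, $G$ to rule out its minimality, $h$ with the cyclic permutation to handle the $\top$ case, and the $F$-operations via Note~\ref{newnote} to descend along covering chains. The only cosmetic differences are that the paper treats the two directions in the opposite order and organises the converse by first taking the maximum $u$ and then splitting on $u = \top$ versus $u \ne \top$, whereas you split on $\top \in B$ versus $\top \notin B$ before taking the maximum.
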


\begin{proof}
Let $A$ be a non-empty subuniverse of~$\Q$ with $A \ne \{\top\}$. We will show that
$A = \dnarrow u$, for some $u \in S$. Since $A$ is closed under~$\vee$,
there is a maximum element $u$ of~$A$ in~$\Sb^\sharp$. Suppose that $u \in \Min(\Sb^\sharp)$.
Then $\phi^\sharp(u) = m'$, for some $m\in \Min(\Pb)$.
This gives $u^{\uparrow} = g_m(u) \in A$, which is a contradiction.
Thus $u \in S$.

First assume that $u = \top$. Since $A$ is closed under~$h$, it follows that $A$
is closed under the cyclic permutation $\lambda$ of~$S^\sharp\notop$.
As $A\notop \ne \emptyset$, we have $A = Q = \dnarrow u$.

Now we can assume that $u \ne \top$, and so $\top \notin A$.
We want to show that $A$ is a down-set of~$\Sb^\sharp$.
Consider a covering pair $s \prec t$ in~$\Sb^\sharp$ with $t \ne \top$.
By Note~\ref{newnote}, we have $f_{\phi^\sharp(t)\phi^\sharp(s)}(t) =  s$.
As $A$ is closed under each $f_{ba}\in F$ and $\Sb^\sharp$ is finite,
it follows that $A$ is a down-set. Thus $A = \dnarrow u$.

It remains to check that the sets $\{\top\}$ and $\dnarrow u$, for $u \in S$,
are indeed subuniverses of~$\Q$. First consider $\{\top\}$.
This set is clearly closed under~$\vee$, $\tau$ and~$h$.
Since $\phi^\sharp(\top) = \top$, the set $\{\top\}$ is also closed under each $f_{ba} \in F$
and each $g_m\in G$. Thus $\{\top\}$ is a subuniverse of~$\Q$.

Now consider $\dnarrow u$, for some $u\in S\notop$. The set $\dnarrow u$ is closed
under each $f_{ba} \in F$, since $f_{ba}(x) \in N_x \cup \{x\}$, for all $x \in S^\sharp$.
For each $m\in \Min(\Pb)$, if $\phi^\sharp(x) = m'$, then $x \in \Min(\Sb^\sharp)$
and so $x^{\uparrow} \in \dnarrow u$, as $u \notin \Min(\Sb^\sharp)$.
Thus $\dnarrow u$ is closed under each $g_m \in G$.
As $\top \notin \dnarrow u$, the set $\dnarrow u$ is closed under~$h$.
Since $\dnarrow u$ is also closed under~$\vee$ and~$\tau$, it is a subuniverse of~$\Q$.
\end{proof}

\begin{definition}\label{def: qu}
Using the previous lemma, for each $u\in S$, we can define $\Q_u$
to be the subalgebra of~$\Q$ with universe $\dnarrow u$.
\end{definition}

\begin{lemma}\label{lem: equiv}
Each subalgebra of\/ $\Q$ is homomorphically equivalent to~$\Q_u$,
for some $u \in S$.
\end{lemma}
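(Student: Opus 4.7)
The plan is to invoke Lemma~\ref{lem: subuniverse}, which classifies the non-empty subalgebras of $\Q$ as precisely $\{\top\}$ together with $\Q_u$ for $u \in S$. For any $\Q_u$ the required equivalence $\Q_u \equiv \Q_u$ is trivial, so the only case that requires thought is the singleton subalgebra $\{\top\}$, and I need to exhibit a $u \in S$ with $\{\top\} \equiv \Q_u$.

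I would take $u = \top$. Two small bookkeeping observations make this work. First, $\top \in S$: since $\top$ is the unique maximum of $\Pb^\top$, the one-letter word $\top$ is a covering chain reaching a maximal element of $\Pb^\top$, and so belongs to $F$, the universe of $\Sb$. Second, $\top$ is also the top of the tree $\Sb^\sharp$, so $\dnarrow \top = S^\sharp$ and hence $\Q_\top = \Q$.

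It then remains to witness $\{\top\} \equiv \Q_\top = \Q$. The inclusion $\{\top\} \hookrightarrow \Q$ gives one direction. For the other, the constant map $c_\top \colon \Q \to \{\top\}$ sending every element to $\top$ is a homomorphism precisely because $\{\top\}$ is a subuniverse of $\Q$ by Lemma~\ref{lem: subuniverse}, i.e.\ every fundamental operation of $\Q$ fixes $\top$. Combining these, $\{\top\} \equiv \Q_\top$, and the lemma follows. There is no real technical obstacle here; the statement is a direct bookkeeping consequence of the subalgebra classification, but it is worth isolating because subsequent results will only need to compare the $\Q_u$ for $u \in S$.
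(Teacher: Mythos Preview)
Your proof is correct and follows exactly the same route as the paper's proof, which simply notes that the result follows immediately from Lemma~\ref{lem: subuniverse} since the trivial subalgebra $\{\top\}$ is homomorphically equivalent to $\Q = \Q_\top$. You have just spelled out the details of that equivalence more explicitly.
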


\begin{proof}
This follows immediately from Lemma~\ref{lem: subuniverse}, since the trivial subalgebra
of~$\Q$ is homomorphically equivalent to $\Q = \Q_\top$.
\end{proof}

\goodbreak

\begin{lemma}\label{lem: iff phi}
Let $u, v \in S$. Then $\phi(u) = \phi(v)$ if and only if\/ $\Q_u \cong \Q_v$.
\end{lemma}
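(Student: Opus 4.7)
The plan is to handle the two implications separately. For $\phi(u) = \phi(v) \implies \Q_u \cong \Q_v$, I first dispose of the case $u = \top$: since $\phi^\sharp$ is bijective on maximals, $v = \top$ as well and the claim is trivial. Assume therefore that $u, v \ne \top$. Applying Lemma~\ref{lem: mu} to the covering tree $\Sb^\sharp$ of $\Pb^\sharp$ with covering map $\phi^\sharp$ produces an order-isomorphism $\mu \colon \dnarrow u \to \dnarrow v$ satisfying $\phi^\sharp \circ \mu = \phi^\sharp\restrictedto{\dnarrow u}$ and $\mu \circ \psi_x = \psi_{\mu(x)}$ for all $x \in \dnarrow u$. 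To show $\mu$ is an algebra isomorphism $\Q_u \to \Q_v$, I perform an operation-by-operation check: preservation of $\vee$ is automatic, as $\mu$ is an order-isomorphism of sub-join-semilattices of the tree $\Sb^\sharp$; preservation of $\tau$ holds for any bijection; the binary operation $h$ collapses to the first projection on both $\dnarrow u$ and $\dnarrow v$ (since $\top$ lies in neither), so its preservation is immediate; each $f_{ba}$ is preserved because properties~(i) and~(ii) of $\mu$ align the trigger $\phi^\sharp(x) = b$ and yield $\mu(\psi_x(a)) = \psi_{\mu(x)}(a)$; each $g_m$ is preserved because property~(i) aligns the trigger $\phi^\sharp(x) = m'$, and the remaining identity $\mu(x^\uparrow) = \mu(x)^\uparrow$ for $x \in \Min(\Sb^\sharp)$ follows from $\mu$ being an order-isomorphism.

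For the converse, let $\alpha \colon \Q_u \to \Q_v$ be an algebra isomorphism. Preservation of $\vee$ forces $\alpha$ to send the maximum $u$ of $\dnarrow u$ to the maximum $v$ of $\dnarrow v$. If $u = \top$, then $|\dnarrow u| = |S^\sharp|$ strictly exceeds $|\dnarrow v|$ for any $v \ne \top$, forcing $v = \top$ and $\phi(u) = \phi(v) = \top$. Otherwise, set $a_1 := \phi(u)$ and $b_1 := \phi(v)$, both in $P$, and suppose for contradiction that $a_1 \ne b_1$. Choose any lower cover $a \prec a_1$ in $\Pb^\sharp$, which exists because $a_1 \in P$ (namely $a_1' \prec a_1$ if $a_1 \in \Min(\Pb)$, otherwise any lower cover inherited from $\Pb$). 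By the definition of $f_{a_1 a}$, since $\phi^\sharp(u) = a_1$, we have $f_{a_1 a}(u) = \psi_u(a) \in N_u$, which is distinct from $u$; on the other hand, $\phi^\sharp(v) = b_1 \ne a_1$ gives $f_{a_1 a}(v) = v$. Applying $\alpha$ now yields $\alpha(\psi_u(a)) = f_{a_1 a}(\alpha(u)) = v = \alpha(u)$, contradicting injectivity of $\alpha$. Hence $a_1 = b_1$, as required.

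The main obstacle is the bookkeeping for the forward direction's operation check; the subtlest piece is the preservation of $g_m$, which hinges on three observations: covering maps send minimals to minimals, so $\phi^\sharp$-preimages of $m'$ are exactly $\Min(\Sb^\sharp)$; the condition $u \in S$ (hence $u \notin \Min(\Sb^\sharp)$) guarantees that each such minimal's unique upper cover lies inside $\dnarrow u$; and the order-isomorphism $\mu$ preserves that unique upper cover. Once these structural observations are in place, the entire verification reduces to matching definitions, and the converse direction is a short three-line contradiction exploiting a single $f_{ba}$.
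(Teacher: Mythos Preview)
Your proof is correct and follows essentially the same approach as the paper's: both directions use the same key ideas (apply Lemma~\ref{lem: mu} to $\phi^\sharp$ to get the order-isomorphism $\mu$ and verify it preserves each operation; for the converse, use a single operation $f_{ba}$ with $b = \phi^\sharp(u)$ to force $\phi^\sharp(v) = \phi^\sharp(u)$). The only cosmetic differences are that you choose the lower cover in $\Pb^\sharp$ rather than in $\Sb^\sharp$ (equivalent via $\psi_u$) and phrase the converse as a contradiction; note also that your claim ``both in $P$'' tacitly assumes $v \ne \top$, which follows by symmetry from your treatment of the case $u = \top$ but is not actually needed for the argument.
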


\begin{proof}
First assume that there is an isomorphism $\alpha \colon \Q_u \to \Q_v$.
We can assume that $u \ne \top$, since otherwise $u = v = \top$.
As $\alpha$ preserves~$\vee$, we have an order-isomorphism
$\alpha \colon \dnarrow u \to \dnarrow v$ and so $\alpha(u) = v$.
Since $u\notin \Min(\Sb^\sharp)$, we can choose $s\in S^\sharp$ with $s \prec u$.
Using Note~\ref{newnote}, we have $f_{\phi^\sharp(u)\phi^\sharp(s)}(u) = s \ne u$.
Since $\alpha$ is an isomorphism with $\alpha(u) = v$,
this implies that $f_{\phi^\sharp(u)\phi^\sharp(s)}(v) \ne v$.
Hence $\phi^\sharp(v) = \phi^\sharp(u)$ and so $\phi(u) = \phi(v)$, as $u,v\in S$.

Now assume that $\phi(u) = \phi(v)$. We want to show that $\Q_u \cong \Q_v$.
Since we have $\phi^{-1}(\top) = \{\top\}$, we can assume that $u,v \ne \top$.
By applying Lemma~\ref{lem: mu} to the covering map $\phi^\sharp \colon \Sb^\sharp \to \Pb^\sharp$,
we obtain an order-isomorphism $\mu \colon \dnarrow u \to \dnarrow v$ such that
\begin{enumerate}[label={\upshape(\roman*)}]
\item $\phi^\sharp \circ \mu = \phi^\sharp \restrictedto {\dnarrow u}$, and
\item $\mu \circ \psi_x = \psi_{\mu(x)}$, for all $x \in \dnarrow u$.
\end{enumerate}
As $\mu$ is an order-isomorphism, it must preserve $\vee$ and~$\tau$.
To prove that $\mu \colon \Q_u \to \Q_v$ is an isomorphism,
it remains to check that $\mu$ preserves the operations
in $F \cup G$ and the operation~$h$.
\begin{itemize}[leftmargin=2em,itemsep=1ex]
\item[$F$:]
Let $a \prec b$ in $P^\sharp\notop$. We want to show that $\mu$ preserves~$f_{ba}$.
Let $x \in \dnarrow u$. By~(i), we have $\phi^\sharp(x) = \phi^\sharp(\mu(x))$.
Since $f_{ba}$ acts as the identity map on~$S^\sharp \setminus (\phi^\sharp)^{-1}(b)$,
we can assume that $\phi^\sharp(x) = \phi^\sharp(\mu(x)) = b$. Using~(ii), we have
\[
 \mu(f_{ba}(x)) = \mu(\psi_x(a)) = \psi_{\mu(x)}(a) = f_{ba}(\mu(x)).
\]
Thus $\mu$ preserves $f_{ba}$.
\item[$G$:]
Let $m \in \Min(\Pb)$. We want to show that $\mu$ preserves $g_m$. Let $x\in \dnarrow u$.
By~(i), we have $\phi^\sharp(x) = \phi^\sharp(\mu(x))$.
Since $g_m$ acts as the identity map on~$S^\sharp \setminus (\phi^\sharp)^{-1}(m')$,
we can assume that $\phi^\sharp(x) = \phi^\sharp(\mu(x)) = m'$.
Since $\mu \colon \dnarrow u \to \dnarrow v$ is an order-isomorphism, we obtain
\[
 \mu(g_m(x)) = \mu(x^{\uparrow}) = \mu(x)^{\uparrow} = g_m(\mu(x)).
\]
Thus $\mu$ preserves~$g_m$.
\item[$h$:]
We are assuming that $u,v\ne \top$. Thus the binary operation $h$ acts as the first projection
on both $\dnarrow u$ and $\dnarrow v$. Hence $\mu\colon \dnarrow u \to \dnarrow v$
preserves~$h$.
\end{itemize}
We have shown that $\mu \colon \Q_u \to \Q_v$ is an isomorphism.
\end{proof}

We can now construct the required quotient map from the covering tree $\Sb$ of $\Pb^\top$ to the
homomorphism order on $\Sub(\Q)$.

\begin{lemma}\label{lem: eta SOP}
There exists a quotient map $\eta \colon \Sb \to \langle \Sub(\Q)/{\equiv}; \to \rangle$
such that $\ker(\eta) = \ker(\phi)$.
\end{lemma}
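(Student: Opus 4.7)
The plan is to define $\eta \colon S \to \Sub(\Q)/{\equiv}$ by $\eta(u) := \Q_u/{\equiv}$ (legitimate by Definition~\ref{def: qu}) and verify in turn that $\eta$ is order-preserving, surjective, satisfies $\ker(\eta) = \ker(\phi)$, and enjoys the quotient-map lifting property.

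Order-preservation is immediate: if $u \le v$ in $\Sb$, then $\dnarrow u \subseteq \dnarrow v$ in $\Sb^\sharp$, so the inclusion map is a homomorphism $\Q_u \to \Q_v$. Surjectivity of $\eta$ onto $\langle \Sub(\Q)/{\equiv}; \to \rangle$ follows at once from Lemma~\ref{lem: equiv}.

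The core of the proof is the identity $\ker(\eta) = \ker(\phi)$. The inclusion $\ker(\phi) \subseteq \ker(\eta)$ is Lemma~\ref{lem: iff phi}, since $\Q_u \cong \Q_v$ implies $\Q_u \equiv \Q_v$. For the reverse inclusion, I assume $\Q_u \equiv \Q_v$ and aim for $\phi(u) = \phi(v)$. By Theorem~\ref{th: simple}, every non-trivial subalgebra of~$\Q$ is simple, and by Lemma~\ref{lem: subuniverse} the only trivial subalgebra is $\{\top\}$. Hence, when $u, v \ne \top$, neither $\Q_u$ nor $\Q_v$ contains a trivial subalgebra, so every homomorphism between them has non-trivial image, and simplicity forces it to be injective. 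Since both algebras are finite and there are injections in both directions, any such homomorphism is in fact an isomorphism; Lemma~\ref{lem: iff phi} then gives $\phi(u) = \phi(v)$. If instead $u = \top$, any homomorphism $\Q = \Q_\top \to \Q_v$ with $v \ne \top$ would, by simplicity of $\Q$, either be injective (impossible, since $|\Q_v| < |\Q|$) or have trivial image (impossible, since $\Q_v$ contains no trivial subalgebra); thus $v = \top$ and $\phi(u) = \phi(v) = \top$.

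For the quotient-map property, let $\C/{\equiv} \to \D/{\equiv}$ in $\Sub(\Q)/{\equiv}$. By Lemma~\ref{lem: equiv}, choose $u_0, v_0 \in S$ with $\C \equiv \Q_{u_0}$ and $\D \equiv \Q_{v_0}$, and fix a homomorphism $\alpha \colon \Q_{u_0} \to \Q_{v_0}$. If $u_0 \ne \top$ and $\alpha$ has non-trivial image, then (by the same simplicity argument) $\alpha$ is injective; its image is a subalgebra of $\Q_{v_0}$, which by Lemma~\ref{lem: subuniverse} is $\Q_w$ for some $w \in S$ with $\dnarrow w \subseteq \dnarrow v_0$, so $w \le v_0$ in $\Sb$. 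Then $\Q_{u_0} \cong \Q_w$ gives $\phi(u_0) = \phi(w) \le \phi(v_0)$ via Lemma~\ref{lem: iff phi}. The remaining cases (constant $\alpha$, or $u_0 = \top$) all force $v_0 = \top$, and again yield $\phi(u_0) \le \phi(v_0)$. Since $\phi \colon \Sb \to \Pb^\top$ is a quotient map by Lemma~\ref{lem: covering}, I can lift $\phi(u_0) \le \phi(v_0)$ to some $u \le v$ in $\Sb$ with $\phi(u) = \phi(u_0)$ and $\phi(v) = \phi(v_0)$; the identity $\ker(\eta) = \ker(\phi)$ then gives $\eta(u) = \C/{\equiv}$ and $\eta(v) = \D/{\equiv}$, as required.

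The main obstacle is the reverse inclusion $\ker(\eta) \subseteq \ker(\phi)$, since homomorphic equivalence is a priori much weaker than isomorphism. Collapsing it to isomorphism requires simultaneously exploiting the simplicity of non-trivial subalgebras of~$\Q$ (from quasi-primality) and the fact, engineered into $\Q$ via the operations in~$G$, that no $\Q_u$ with $u \ne \top$ admits a trivial subalgebra; the same combination of ingredients also drives the image analysis of $\alpha$ in the quotient-map step.
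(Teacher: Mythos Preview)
Your proof is correct and follows essentially the same strategy as the paper's: define $\eta(u) = \Q_u/{\equiv}$, use Lemma~\ref{lem: iff phi} for $\ker(\phi) \subseteq \ker(\eta)$, and use simplicity together with the fact that $\{\top\}$ is the only trivial subuniverse for the reverse inclusion and the quotient-map property.

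One remark on the quotient-map step: your detour through $\Pb^\top$ is unnecessary. In the main case you already produce $w \le v_0$ in $\Sb$ with $\Q_w \cong \Q_{u_0}$, so the pair $(w, v_0)$ itself witnesses the required lift (this is exactly what the paper does). Descending to $\phi(u_0) \le \phi(v_0)$ and then re-lifting via the quotient map $\phi$ works, but it discards information you already had in hand. Similarly, in the ``remaining cases'' where $v_0 = \top$, the pair $(u_0, \top)$ already lifts directly since $u_0 \le \top$ in~$\Sb$.
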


\begin{proof}
We can define the map $\eta \colon S \to \Sub(\Q)/{\equiv}$ by
\[
 \eta(x) := \Q_x/{\equiv},
\]
for each $x\in S$, using Definition~\ref{def: qu}.

To show that $\ker(\phi) \subseteq \ker(\eta)$, let $u,v\in S$ with $\phi(u) = \phi(v)$.
Then $\Q_u \cong \Q_v$, by Lemma~\ref{lem: iff phi}. Thus $\Q_u \equiv \Q_v$,
giving $\eta(u) = \eta(v)$.

To show that $\ker(\eta) \subseteq \ker(\phi)$, let $u,v\in S$ with $\eta(u) = \eta(v)$.
Then $\Q_u \equiv \Q_v$, so $\Q_u \to \Q_v$ and $\Q_v \to \Q_u$. Since $\Q$ is quasi-primal,
each of these homomorphisms is either constant or an embedding (see Theorem~\ref{th: simple}).
If both are embeddings, then $\Q_u \cong \Q_v$ and therefore $\phi(u) = \phi(v)$,
by Lemma~\ref{lem: iff phi}. Without loss of generality, we can now consider the case that
there is a constant homomorphism $\Q_u \to \Q_v$, whence $\Q_v$ has a trivial subalgebra.
Since $\Q_v \to \Q_u$, it follows that $\Q_u$ also has a trivial subalgebra.
By Lemma~\ref{lem: subuniverse}, the only trivial subuniverse of~$\Q$ is $\{\top\}$.
So we can conclude that $u = v = \top$. We have shown that $\ker(\eta) = \ker(\phi)$.

To see that $\eta$ is order-preserving, let $u \le v$ in~$\Sb$. Then
$\dnarrow u \subseteq \dnarrow v$ in~$\Sb^\sharp$ and therefore $\Q_u \to \Q_v$,
via the inclusion. Hence $\eta(u) \to \eta(v)$.

To prove that $\eta$ is a quotient map, let $\A, \B \in \Sub(\Q)$ with $\A \to \B$.
We want to find $x \le y$ in $\Sb$ such that $\Q_x \equiv \A$ and $\Q_y \equiv \B$.
By Lemma~\ref{lem: equiv}, there exist $u,v\in S$ with $\Q_u \equiv \A$ and $\Q_v \equiv \B$.
Since $u \le \top$ in $\Sb$, we can assume that $v \neq \top$.
As $\A \to \B$, we have a homomorphism $\Q_u \to \Q_v$. Since $v \neq \top$,
the algebra $\Q_v$ has no trivial subalgebras, so this homomorphism is an embedding.
Therefore, using Lemma~\ref{lem: subuniverse}, we must have
\[
\A \equiv \Q_u \cong \Q_w \le \Q_v \equiv \B,
\]
for some $w \in S$ with $w \le v$, as required. Hence $\eta$ is a quotient map.
\end{proof}

We are now able to prove Theorem~\ref{th: PtoQ} by showing that $\Lb_\Q \cong \Lb$.
Recall that $\Lb \cong \downsets \Pb$, from Assumptions~\ref{setup}.
Since we have quotient maps
\[
 \phi \colon \Sb \to \Pb^\top \quad\text{and}\quad
 \eta \colon \Sb \to \langle \Sub(\Q)/{\equiv}; \to \rangle
\]
with $\ker(\phi) = \ker(\eta)$, it follows that
$\Pb^\top \cong \langle \Sub(\Q)/{\equiv}; \to \rangle$.
Since $\Q$ has a trivial subuniverse $\{\top\}$, we can use
Theorem~\ref{th: iso}(ii) to obtain
\[
\Lb_\Q \cong \downsets {\overline {\Pb^\top}} = \downsets \Pb \cong \Lb,
\]
which completes the proof of Theorem~\ref{th: PtoQ}.

\section{The class of homomorphism lattices}\label{sec: extra}

In this section, we consider the class $\Lhom$ consisting of all lattices
$\Lb$ such that $\Lb \cong \Lb_\A$, for some finite algebra~$\A$.
We pose the problem:
\begin{quote}
Does every finite lattice belong to $\Lhom$?
\end{quote}
By Theorem~\ref{th: PtoQ}, we know that every finite distributive lattice belongs to~$\Lhom$.

\begin{lemma}
The class $\Lhom$ is closed under finite products.
\end{lemma}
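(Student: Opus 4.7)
The plan is to combine $\A$ and $\B$ into a single finite algebra $\C$ whose variety is the join of two \emph{independent} varieties carrying the $\A$-structure and the $\B$-structure separately; independence will force every finite algebra in $\Var(\C)$ to decompose uniquely as a product with homomorphisms decomposing coordinatewise, giving $\Lb_\C \cong \Lb_\A \times \Lb_\B$.

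First I would reduce to the case where $\A$ and $\B$ have disjoint signatures $\Sigma_A$ and $\Sigma_B$ containing no nullary operation symbols. Disjointness is achieved by renaming. For nullaries, any nullary symbol $c$ with value $c_0$ can be replaced by a unary symbol $c'$ interpreted as the constant function $x \mapsto c_0$; in the resulting variety every algebra satisfies $c'(x) \approx c'(y)$, so $c'$ picks out a canonical distinguished element that plays exactly the role of the original nullary, and this replacement leaves the homomorphism lattice unchanged.

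Next I would form $\C$ on universe $A \times B$ in signature $\Sigma := \Sigma_A \cup \Sigma_B \cup \{t\}$, where $t$ is a fresh binary symbol, by setting
\begin{align*}
f^\C((a_1, b_1), \ldots, (a_n, b_n)) &:= (f^\A(a_1, \ldots, a_n), b_1) \text{ for } f \in \Sigma_A,\\
g^\C((a_1, b_1), \ldots, (a_n, b_n)) &:= (a_1, g^\B(b_1, \ldots, b_n)) \text{ for } g \in \Sigma_B,
\end{align*}
and $t^\C((a_1, b_1), (a_2, b_2)) := (a_1, b_2)$. Writing $\A^* := \pi_A(\C)$ and $\B^* := \pi_B(\C)$, in $\A^*$ every symbol of $\Sigma_B$ and the symbol $t$ act as first projections, while in $\B^*$ every symbol of $\Sigma_A$ acts as first projection and $t$ acts as second projection. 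Thus $\A^* \models t(x,y) \approx x$ and $\B^* \models t(x,y) \approx y$, so $\Var(\A^*)$ and $\Var(\B^*)$ are independent. The standard theory of independent varieties now yields that every algebra in $\Var(\C) = \Var(\A^*) \vee \Var(\B^*)$ is uniquely a direct product $\D_A \times \D_B$ with $\D_A \in \Var(\A^*)$ and $\D_B \in \Var(\B^*)$, and every homomorphism between two such products decomposes coordinatewise; consequently $\Lb_\C \cong \Lb_{\A^*} \times \Lb_{\B^*}$.

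Finally, $\A^*$ expands $\A$ only by operations that are first projections of arity at least~$1$, which every function preserves automatically, so the reduct and trivial-expansion functors give an isomorphism of categories between $\Var(\A)\fin$ and $\Var(\A^*)\fin$ that respects~$\to$ and preserves products and coproducts. Hence $\Lb_{\A^*} \cong \Lb_\A$, and symmetrically $\Lb_{\B^*} \cong \Lb_\B$, completing the argument. The main obstacle is the independent-varieties decomposition: although standard, one must apply it carefully to obtain both the unique product form of every finite algebra in $\Var(\C)$ and the coordinatewise decomposition of all homomorphisms, which together are exactly what yields that the homomorphism order is a product.
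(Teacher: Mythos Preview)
Your proof is correct and follows essentially the same route as the paper: both remove nullaries, pass to a common signature with an extra binary symbol acting as first projection on one factor and second projection on the other, form the product $\C$, and then invoke the theory of independent varieties (the paper cites Gr\"atzer--Lakser--P{\l}onka~\cite{GLP}) to conclude $\Lb_\C \cong \Lb_{\A^*}\times\Lb_{\B^*} \cong \Lb_\A\times\Lb_\B$. The only cosmetic difference is that the paper defines the expanded factors $\B_1,\B_2$ first and sets $\C:=\B_1\times\B_2$, whereas you build $\C$ directly and recover $\A^*,\B^*$ as its projections; these are the same construction from opposite ends.
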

\begin{proof}
Consider finite algebras $\A_1 = \langle A_1; F_1\rangle$ and $\A_2 = \langle A_2; F_2\rangle$.
Up to term equivalence, we can assume that $F_1$ and $F_2$ are disjoint
and do not contain nullary operations. So we can define algebras $\B_1$ and $\B_2$
of signature $F_1 \cup F_2 \cup \{\ast\}$ such that $\B_i$ is term equivalent to~$\A_i$,
where $\ast$ is a binary operation that acts as the first projection on $\B_1$
and the second projection on~$\B_2$.

Now the algebra $\C := \B_1 \times \B_2$ is the independent product of $\B_1$ and~$\B_2$,
and it follows that $\Lb_\C \cong \Lb_{\B_1} \times \Lb_{\B_2}$; see~\cite{GLP}.
\end{proof}

It is not clear whether $\Lhom$ is closed under forming homomorphic images or
taking sublattices. Indeed, if we could show that $\Lhom$ were closed under taking sublattices,
then it would follow from Example~\ref{ex: retracts}(i) below that $\Lhom$ contained all finite lattices.

\subsection*{Congruence lattices}

One of the most famous unsolved problems in universal algebra,
the \emph{Finite Lattice Representation Problem},
asks whether every finite lattice arises as the congruence lattice of a finite algebra;
see Problem~13 of Gr\"atzer~\cite[p.~116]{GUAbook}. It is therefore natural to ask
whether the congruence lattice of each finite algebra belongs to~$\Lhom$. That is,
given a finite algebra $\A$, does there exist a finite algebra $\B$ such that $\Lb_\B \cong \Con(\A)$?

We can obtain some interesting applications by focussing on the special case where
$\Lb_\A \cong \Con(\A)$. We can assume, without affecting the lattice $\Con(\A)$,
that every element of~$\A$ is the value of a nullary term function. Then,
for all $\B\in \Var(\A)$, the values of the nullary term functions of $\B$ form a
subalgebra that we will denote by~$\B_0$.

\begin{lemma}\label{lem: new}
Let $\A$ be a finite algebra such that each element is the value of a
nullary term function. Then the following are equivalent:
\begin{enumerate}[label={\upshape(\roman*)}]
\item
$\Lb_\A \cong \Con(\A)$;
\item
for every finite algebra $\B\in \Var(\A)$, we have $\B \to \B_0$;
\item
\begin{enumerate}[label={\upshape(\alph*)}]
\item
for every finite subdirectly irreducible algebra $\B\in \Var(\A)$, we have $\B \to \B_0$, and
\item
for all\/ $\theta_1, \theta_2\in \Con(\A)$, we have
$(\A/\theta_1) \times (\A/\theta_2) \to \A/(\theta_1\cap \theta_2)$.
\end{enumerate}
\end{enumerate}
\end{lemma}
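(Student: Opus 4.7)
My plan is to prove (i) $\Leftrightarrow$ (ii) by studying a natural candidate map $\Phi\colon \Con(\A) \to \Lb_\A$, and then to deduce (ii) $\Leftrightarrow$ (iii) using subdirect decomposition. The key preliminary observation is that, because $\A$ is generated by values of nullary terms, for every $\B \in \Var(\A)$ there is a \emph{unique} homomorphism $\eta_\B\colon \A \to \B$, and its image is precisely~$\B_0$. Consequently $\B_0 \cong \A/\ker(\eta_\B)$, so every $\B_0$ has the form $\A/\sigma$ for some $\sigma \in \Con(\A)$; and a homomorphism $\A/\theta \to \B$ exists iff $\theta \subseteq \ker(\eta_\B)$, which specialises to $\A/\theta \to \A/\psi$ iff $\theta \subseteq \psi$. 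In particular the map $\Phi(\theta) := (\A/\theta)/{\equiv}$ is order-preserving, order-reflecting and injective (without assuming any of (i), (ii), (iii)).

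For (ii) $\Rightarrow$ (i), assume (ii). Then for each finite $\B \in \Var(\A)$, combining $\B \to \B_0$ with the inclusion $\B_0 \hookrightarrow \B$ gives $\B \equiv \B_0 \cong \A/\ker(\eta_\B)$, so $\Phi$ is surjective. To upgrade $\Phi$ to a lattice isomorphism, I would establish the two identifications $(\A/\theta_1 \times \A/\theta_2)_0 \cong \A/(\theta_1 \cap \theta_2)$ (by computing the kernel of the diagonal map $\A \to \A/\theta_1 \times \A/\theta_2$) and $(\A/\theta_1 \sqcup \A/\theta_2)_0 \cong \A/(\theta_1 \vee \theta_2)$ (by the universal property of the coproduct, using the canonical surjections $\A/\theta_i \twoheadrightarrow \A/(\theta_1\vee\theta_2)$). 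Applying (ii) to the finite product, and to the coproduct (which is finite since $\Var(\A)$ is locally finite), then yields $\A/\theta_1 \times \A/\theta_2 \equiv \A/(\theta_1\cap\theta_2)$ and $\A/\theta_1 \sqcup \A/\theta_2 \equiv \A/(\theta_1\vee\theta_2)$, so $\Phi$ preserves meets and joins. Conversely, for (i) $\Rightarrow$ (ii), since $\A$ is finite both $\Con(\A)$ and $\Lb_\A$ are finite, and by (i) they have the same cardinality; hence the injection $\Phi$ is a bijection. So any finite $\B\in\Var(\A)$ satisfies $\B \equiv \A/\theta$ for some $\theta$, and then the map $\A/\theta \to \B$ has image $\B_0$, so composing $\B \to \A/\theta \twoheadrightarrow \B_0$ gives $\B \to \B_0$.

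For (ii) $\Leftrightarrow$ (iii), the implications (ii) $\Rightarrow$ (iii)(a) and (ii) $\Rightarrow$ (iii)(b) are immediate, since finite subdirectly irreducible algebras and products $\A/\theta_1 \times \A/\theta_2$ are finite algebras in $\Var(\A)$. For (iii) $\Rightarrow$ (ii), let $\B \in \Var(\A)$ be finite. By Birkhoff's subdirect representation theorem, embed $\B$ as a subdirect product $\B \hookrightarrow \prod_{i=1}^n \B_i$ of finite subdirectly irreducible algebras $\B_i \in \Var(\A)$. By (iii)(a), $\B_i \to (\B_i)_0 \cong \A/\sigma_i$ with $\sigma_i := \ker(\eta_{\B_i})$, so taking products yields $\B \to \prod_{i=1}^n \A/\sigma_i$. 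Iterating (iii)(b) then gives $\prod_{i=1}^n \A/\sigma_i \to \A/\bigl(\bigcap_{i=1}^n \sigma_i\bigr)$. Because the subdirect embedding is injective, $\ker(\eta_\B) = \bigcap_i \sigma_i$, whence $\A/\bigl(\bigcap_i \sigma_i\bigr) \cong \B_0$; composing the displayed maps produces the required $\B \to \B_0$. The main delicate point throughout is the careful identification of $\B_0$ (and in particular of $(\A/\theta_1 \times \A/\theta_2)_0$ and $(\A/\theta_1 \sqcup \A/\theta_2)_0$) as a concrete quotient of $\A$ in each situation, which is exactly where the nullary-term hypothesis is used essentially.
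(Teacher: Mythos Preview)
Your proof is correct and follows essentially the same route as the paper: both define the same order-embedding $\Phi\colon \Con(\A)\to\Lb_\A$, reduce (i) to surjectivity of~$\Phi$ via a cardinality count, and handle (iii)~$\Rightarrow$~(ii) by subdirect decomposition plus iterated use of~(iii)(b). Two small remarks. First, in (ii)~$\Rightarrow$~(i) your verification that $\Phi$ preserves meets and joins (via the coproduct computation) is unnecessary: once $\Phi$ is a bijective order-embedding it is an order-isomorphism, and any order-isomorphism between lattices is automatically a lattice isomorphism. Second, be careful with the phrase ``since $\A$ is finite both $\Con(\A)$ and $\Lb_\A$ are finite'': finiteness of $\Lb_\A$ does \emph{not} follow from finiteness of~$\A$ (the paper gives a five-element unary algebra with infinite~$\Lb_\A$); here it follows only from the assumption~(i), which is what you actually use.
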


\begin{proof}
We will be using the following consequence of our assumption that each element of~$\A$ is
named via a nullary term function:
\begin{itemize}
\item[($\natural$)]
For all $\theta \in \Con(\A)$, if $\C \equiv \A/\theta$ and $\B$ embeds into $\C$, then $\B \equiv \A/\theta$.
\end{itemize}
First we define the map $\psi \colon \Con(\A) \to \Lb_\A$ by
\[
 \psi(\theta) := (\A/\theta)/{\equiv},
\]
for all $\theta\in \Con(\A)$. Note that $\psi$ is order-preserving,
and that it follows that $\psi$ is an order-embedding as every element of~$\A$
is named. Hence, since $\Con(\A)$ is finite, we have $\Lb_\A \cong \Con(\A)$
if and only if the map $\psi$ is surjective.

(i)~$\Rightarrow$~(ii): Assume that (i) holds and let $\B\in \Var(\A)_{\mathrm{fin}}$.
The map $\psi$ is surjective, so $\B \equiv \A/{\theta}$, for some $\theta \in \Con(\A)$.
Since $\B_0$ embeds into $\B$, it follows by~$(\natural$) that $\B_0 \equiv \A/{\theta}$.
Thus $\B \equiv \B_0$, whence (ii)~holds.

(ii)~$\Rightarrow$~(iii): Now assume that (ii) holds. Clearly (iii)(a) holds.
To prove (iii)(b), let $\theta_1, \theta_2\in \Con(\A)$ and define
$\B := (\A/\theta_1) \times (\A/\theta_2)$. Since $\A/(\theta_1\cap \theta_2)$ embeds into~$\B$,
there must be an embedding $\alpha \colon \B_0 \to \A/(\theta_1\cap \theta_2)$.
By (ii), there exists a homomorphism $\beta \colon \B \to \B_0$.
Thus $\alpha\circ \beta \colon (\A/\theta_1) \times (\A/\theta_2)\to \A/(\theta_1\cap \theta_2)$,
whence condition (iii)(b)~holds.

(iii)~$\Rightarrow$~(i): Finally, assume that (iii) holds. We must show that $\psi$ is surjective.
Let $\B\in \Var(\A)_{\mathrm{fin}}$. Then $\B$ embeds into a finite product $\prod_{i\in I} \C_i$,
where each $\C_i$ is a finite subdirectly irreducible algebra in $\Var(\A)$.

Let $i\in I$. Then $\C_i \to (\C_i)_0$, by~(iii)(a), and therefore $\C_i \equiv (\C_i)_0$.
As $\A$~is the zero-generated free algebra in $\Var(\A)$, we have $(\C_i)_0 \cong \A/\theta_i$
and so $\C_i \equiv \A/\theta_i$, for some $\theta_i \in \Con(\A)$.
It follows that $\prod_{i\in I} \C_i \equiv \prod_{i\in I} \A/{\theta_i}$.

Define $\theta := \bigcap_{i\in I} \theta_i$ in $\Con(\A)$. Since $\A/\theta \to \prod_{i\in I} \A/\theta_i$
always holds, it follows by induction from~(iii)(b) that $\prod_{i\in I} \A/\theta_i \equiv \A/\theta$.
We now have $\prod_{i\in I} \C_i \equiv \A/\theta$, and so $\B \equiv \A/\theta$, using~($\natural$).
Hence the map~$\psi$ is surjective, as required.
\end{proof}

Given an algebra $\A$, let $\A^+$ denote the algebra obtained from~$\A$ by
naming each element via a nullary operation. Since $\Con(\A^+) = \Con(\A)$,
we obtain the following result by applying Lemma~\ref{lem: new} (ii)~$\Rightarrow$~(i)
to the algebra~$\A^+$.

\begin{corollary}\label{cor: retract}
Let $\A$ be a finite algebra, and assume that every finite algebra in~$\Var(\A)$ has a retraction onto
each of its subalgebras. Then $\Lb_{\A^+} \cong \Con(\A)$.
\end{corollary}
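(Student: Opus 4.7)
The plan is to apply Lemma~\ref{lem: new} directly to the expanded algebra~$\A^+$. Since adding constant symbols for existing elements does not change the congruences, we have $\Con(\A^+) = \Con(\A)$, and by construction every element of~$\A^+$ is the value of a nullary term function of~$\A^+$. It therefore suffices to verify condition~(ii) of Lemma~\ref{lem: new} for~$\A^+$: namely, that every finite $\B \in \Var(\A^+)$ admits a homomorphism $\B \to \B_0$.

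So, given such a~$\B$, the first step is to pass to the $\A$-reduct $\B^-$, observing that $\B^-$ is a finite algebra in~$\Var(\A)$ and that the $\A$-reduct $(\B_0)^-$ is a subalgebra of~$\B^-$. The hypothesis on~$\A$ then supplies a retraction $\rho \colon \B^- \to (\B_0)^-$ in the signature of~$\A$. The second step is to promote $\rho$ to a homomorphism in the full signature of~$\A^+$: for each added nullary symbol~$c$, the element $c^{\B}$ lies in~$\B_0$ (by the very definition of~$\B_0$ as the subalgebra of values of nullary term functions) and $\rho$ fixes $\B_0$ pointwise (being a retraction onto it), so
\[
 \rho(c^{\B}) = c^{\B} = c^{\B_0}.
\]

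Thus $\rho \colon \B \to \B_0$ is a homomorphism in~$\Var(\A^+)$, which verifies condition~(ii) of Lemma~\ref{lem: new}, and the implication (ii)$\Rightarrow$(i) of that lemma delivers $\Lb_{\A^+} \cong \Con(\A^+) = \Con(\A)$, as required. I do not expect any substantive obstacle here: the argument is essentially a matter of bookkeeping between the signatures of~$\A$ and~$\A^+$, with the only point worth a brief pause being that an $\A$-signature retraction onto $(\B_0)^-$ automatically honours the nullary symbols added in passing to~$\A^+$, simply because those constants already live in the retract.
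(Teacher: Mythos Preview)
Your proof is correct and follows exactly the approach the paper takes: apply the implication (ii)$\Rightarrow$(i) of Lemma~\ref{lem: new} to~$\A^+$, using $\Con(\A^+)=\Con(\A)$. The paper's proof is a single sentence that leaves the verification of condition~(ii) implicit, whereas you spell out the passage to the $\A$-reduct and the check that the retraction respects the added constants; this is the right bookkeeping, and there is nothing to add.
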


We can now easily show that $\Lhom$ contains all finite partition lattices and all finite subspace lattices.
Note that every finite lattice embeds into a finite partition lattice (Pudl\'ak and T\r{u}ma~\cite{PuT}).

\begin{example}\label{ex: retracts}\quad
\begin{enumerate}[label={\upshape(\roman*)}]
\item
For every non-empty finite set~$A$, the lattice $\Equiv(A)$ of all equivalence relations on~$A$ belongs to~$\Lhom$.
\item
For every finite vector space $\V$, the lattice $\Sub(\V)$ of all subspaces of\/~$\V$ belongs to~$\Lhom$.
\end{enumerate}
\end{example}

\begin{proof}
Both parts follow from Corollary~\ref{cor: retract}.
For~(i), note that every non-empty subset of a set is a retract.
For~(ii), note that every subspace of a vector space is a retract
and that congruences correspond to subspaces.
\end{proof}

We can use this example to say something about the first-order theory of~$\Lhom$.

\begin{lemma}
The only universal first-order sentences true in the class~$\Lhom$ are those true in all lattices.
\end{lemma}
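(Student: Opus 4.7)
The plan is to argue in two stages, reducing ``true in every $\Lb \in \Lhom$'' first to ``true in every finite lattice'', and then to ``true in every lattice''.

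\textbf{Stage one.} By Example~\ref{ex: retracts}(i), the partition lattice $\Equiv(A)$ lies in $\Lhom$ for every non-empty finite set~$A$; hence any universal sentence $\phi$ true throughout $\Lhom$ is in particular true in every finite partition lattice. Combining this with the Pudl\'ak and T\r{u}ma embedding theorem (every finite lattice embeds into some finite partition lattice) and with the standard model-theoretic fact that universal sentences are preserved under substructures, I would conclude that $\phi$ is true in every finite lattice.

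\textbf{Stage two.} It then remains to pass from ``true in every finite lattice'' to ``true in every lattice''. Here I would invoke the classical fact that the universal theory of all lattices coincides with the universal theory of finite lattices. Explicitly, suppose for contradiction that $\phi$ fails on some tuple $\bar a$ in a lattice~$\M$. Then $\neg\phi(\bar a)$ is a quantifier-free condition pinning down finitely many (in)equalities among lattice terms evaluated at~$\bar a$. This condition is captured by the sublattice $\langle \bar a\rangle$, which is a quotient of the free lattice on $|\bar a|$ generators by some congruence~$\alpha$, together with a finite list of pairs that must remain outside~$\alpha$. Residual finiteness of finitely generated lattices then yields a congruence $\beta \supseteq \alpha$ whose quotient is finite and still separates each of those pairs; in this finite lattice the image of $\bar a$ witnesses~$\neg\phi$, contradicting stage one.

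\textbf{Main obstacle.} Stage one is essentially bookkeeping. The substantive content lies in stage two, and specifically in the residual finiteness of finitely generated lattices (equivalently, the finite model property for the universal theory of lattices). Taken as a classical fact, the argument is short; otherwise one would need a self-contained proof, for instance by combining Whitman's embedding of every lattice into a partition lattice with a compactness or ultraproduct argument to transport a witness of $\neg\phi$ living in an infinite partition lattice down to a finite partition lattice in $\Lhom$.
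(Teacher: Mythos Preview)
Your overall architecture matches the paper's exactly: argue the contrapositive, reduce ``fails in some lattice'' to ``fails in some finite lattice'', then invoke Pudl\'ak--T\r{u}ma together with Example~\ref{ex: retracts}(i). Stage~1 is identical to what the paper does.

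The difference is entirely in Stage~2. The paper does not appeal to residual finiteness as a black box; instead it gives a two-line direct construction following Dean: take the finite set $A$ of evaluations of all subterms occurring in~$\Phi$ at the witnessing tuple, regard $\A = \langle A;\le\rangle$ as a finite ordered set, and pass to its Dedekind--MacNeille completion~$\DM(\A)$. Since $\DM(\A)$ preserves all joins and meets that already exist in~$\A$, every subterm evaluates the same way, so $\sigma$ fails in the finite lattice~$\DM(\A)$. This is more elementary and self-contained than importing a residual-finiteness theorem.

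One caution about your detailed argument: you set $\alpha$ to be the full kernel of $F \to \langle\bar a\rangle$, so $F/\alpha$ is an \emph{arbitrary} finitely generated lattice, and you then need residual finiteness of all finitely generated lattices. That statement is considerably stronger than what is required and is not obviously true (it would force every finitely generated subdirectly irreducible lattice to be finite). What you actually need---and what you correctly name in parentheses as the finite model property for the universal theory of lattices---is only residual finiteness of \emph{finitely presented} lattices (equivalently, the finite embeddability property). To get that, take $\alpha$ to be the congruence generated by the finitely many equalities appearing in the satisfied disjunct of~$\neg\Phi$, not the full kernel. With that correction your argument goes through, and the Dedekind--MacNeille construction in the paper is precisely a slick proof of this fact.
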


\begin{proof}
Consider a universal first-order sentence $\sigma = \forall x_1\dots \forall x_n\ \Phi(x_1,\dots,x_n)$
that fails in a lattice~$\Lb$. We want to show that $\sigma$ fails in a finite lattice.
By Pudl\'ak and T\r{u}ma~\cite{PuT}, it will then follow that $\sigma$~fails in a
finite partition lattice, whence $\sigma$~fails in~$\Lhom$ by Example~\ref{ex: retracts}(i).

The proof of Dean~\cite[Theorem~1]{D56} for equations can be extended to universal sentences.
Choose witnesses $a_1,\dots,a_n \in L$ of the failure of~$\sigma$ in~$\Lb$. Let $A$ be the
subset of~$L$ consisting of the corresponding evaluations of all subterms of terms appearing
in~$\Phi$, and let $\A = \langle A; \le\rangle$ inherit the order from~$\Lb$.
Then $\A$ is a finite ordered set, and the Dedekind--MacNeille completion $\DM(\A)$
is a finite lattice that preserves all existing joins and meets from~$\A$.
Hence $\sigma$ fails in~$\DM(\A)$.
\end{proof}

\subsection*{Intervals in subgroup lattices}

P\'alfy and Pudl\'ak~\cite{PP80} have proved the equivalence of the following two statements:
\begin{itemize}
\item Every finite lattice arises as the congruence lattice of a finite algebra.
\item Every finite lattice arises as an interval in the subgroup lattice of a finite group.
\end{itemize}
So, given the Finite Lattice Representation Problem, it is also natural to investigate
which intervals in subgroup lattices belong to~$\Lhom$.

The next example was provided by Keith Kearnes, and shows that $\Lhom$ contains
every lattice that can be obtained by adding a new top element to an interval
in the subgroup lattice of a finite group.

\begin{example}\label{ex: subgroups}
Let $\G$ be a finite group.
\begin{enumerate}[label={\upshape(\roman*)}]
\item
Let $\A = \langle A ; \{\,\lambda_g \mid g\in G\,\} \rangle$ be a finite $\G$-set
regarded as a unary algebra, and assume that $\A$ has a trivial subalgebra.
Then $\Lb_{\A^+} \cong \Con(\A)$.
\item
Now let\/ $\Hb$ be a subgroup of\/~$\G$, and define $\A$ to be the $\G$-set with universe
$A = \{\, aH \mid a\in G\,\} \dotcup \{\infty\}$ such that $\lambda_g(aH) = gaH$
and $\lambda_g(\infty) = \infty$, for all $g,a\in G$. Then $\Lb_{\A^+} \cong [\Hb, \G] \oplus \mathbf 1$.
\end{enumerate}
\end{example}

\begin{proof}
(i): We will apply Lemma~\ref{lem: new} (ii)~$\Rightarrow$~(i) to the algebra~$\A^+$.
By assumption, there exists $a\in A$ such that $\{a\}$ is a subuniverse of~$\A$.
Let $\B$ be a finite algebra in $\Var(\A^+)$. Since $\B$ is a model of the equational theory of~$\A^+$,
both of the subsets $B\setminus B_0$ and $\{a^\B\}$ of~$B$ are closed under each~$\lambda_g$.
Hence, we may define a homomorphism $\phi\colon\B\to\B_0$ by $\phi(b) = b$, for all $b\in B_0$,
and $\phi(b) = a^\B$, for all $b\in B\setminus B_0$. It follows from Lemma~\ref{lem: new}
that $\Lb_{\A^+} \cong \Con(\A)$.

(ii): By part~(i), it suffices to show that $\Con(\A) \cong [\Hb, \G] \oplus \mathbf 1$.
Note that $\infty/{\theta} = \{\infty\}$, for all $\theta \in \Con(\A)\setminus \{1_\A\}$,
as the action of~$\G$ on $A\setminus \{\infty\}$ is transitive. There are mutually inverse
order-isomorphisms between $\Con(\A)\setminus \{1_\A\}$ and $[\Hb,\G]$, given by
\begin{align*}
\theta \mapsto \K_\theta, \quad &\text{where } K_\theta :=\{\,a\in G \mid aH \equiv H \pmods{\theta}\,\}, \text{ and}\\
\K \mapsto \theta_\K, \quad &\text{where } aH \equiv bH \pmods{\theta_\K} \iff a^{-1}b\in K. \qedhere
\end{align*}
\end{proof}

\subsection*{Five-element lattices}

The lattice $\M_3$ belongs to~$\Lhom$ by Example~\ref{ex: retracts},
since it can be represented as $\Equiv(\{0, 1, 2\})$ or as $\Sub(\mathbb Z_2^2)$.

With the exception of the pentagon lattice~$\N_5$, we now know that all lattices of size up to~5
belong to~$\Lhom$. Our aim for the remainder of this section is to apply Lemma~\ref{lem: new}
to help us find an example of a finite algebra $\A$ such that $\Lb_\A \cong \N_5$.
The algebra $\A$ will be a four-element distributive bisemilattice with all elements named via nullary operations.

An algebra $\A =\langle A; \wedge, \sqcap\rangle$ is said to be a \emph{distributive bisemilattice}
(alternatively, a distributive quasilattice) if both $\wedge$ and $\sqcap$ are semilattice
operations on~$A$ and each distributes over the other; see~\cite{Pl,Ksi,GR}.
Kalman~\cite{Ksi} has proved that, up to isomorphism,
the only subdirectly irreducible distributive bisemilattices
are the algebra~$\D$ and its subalgebras~$\Sb$ and~$\Lb$, shown in Figure~\ref{fig: sld}.
(Note that we depict both $\wedge$ and $\sqcap$ as meet operations.) The algebra~$\Sb$
is term equivalent to the two-element semilattice, and the algebra~$\Lb$ is the
two-element lattice.

\begin{figure}[tb]
\begin{tikzpicture}[node distance=0.75cm]
\begin{scope}
\begin{scope}
  \node[element,label=right: {\small $x$}] (i) {};
  \node[element,above of=i,label=right: {\small $y$}] (0) {};
  \node[element,above of=0,label=right: {\small $z$}] (1) {};
  \draw (1) -- (0) -- (i);
  \node[below of=i] (m) {$\wedge$};
  \node[left of=i] (l) {$\D$};
\end{scope}
\begin{scope}[xshift=1.25cm]
  \node[element,label=right: {\small $x$}] (i) {};
  \node[element,above of=i,label=right: {\small $z$}] (1) {};
  \node[element,above of=1,label=right: {\small $y$}] (0) {};
  \draw (0) -- (1) -- (i);
  \node[below of=i] (m) {$\sqcap$};
\end{scope}
\end{scope}
\begin{scope}[xshift=4.5cm]
\begin{scope}
  \node[element,label=right: {\small $x$}] (i) {};
  \node[element,above of=i,label=right: {\small $y$}] (0) {};
  \draw (0) -- (i);
  \node[below of=i] (m) {$\wedge$};
  \node[left of=i] (l) {$\Sb$};
\end{scope}
\begin{scope}[xshift=1.25cm]
  \node[element,label=right: {\small $x$}] (i) {};
  \node[element,above of=i,label=right: {\small $y$}] (0) {};
  \draw (0) -- (i);
  \node[below of=i] (m) {$\sqcap$};
\end{scope}
\end{scope}
\begin{scope}[xshift=9cm]
\begin{scope}
  \node[element,label=right: {\small $y$}] (0) {};
  \node[element,above of=0,label=right: {\small $z$}] (1) {};
  \draw (1) -- (0);
  \node[below of=0] (m) {$\wedge$};
  \node[left of=0] (l) {$\Lb$};
\end{scope}
\begin{scope}[xshift=1.25cm]
  \node[element,label=right: {\small $z$}] (1) {};
  \node[element,above of=1,label=right: {\small $y$}] (0) {};
  \draw (0) -- (1);
  \node[below of=1] (m) {$\sqcap$};
\end{scope}
\end{scope}
\end{tikzpicture}
\caption{The subdirectly irreducible distributive bisemilattices.}\label{fig: sld}
\end{figure}

\begin{example}\label{ex: pentagon}
Let $\A = \langle \{0,a,b,1\}; \wedge, \sqcap, 0, a, b, 1\rangle$ be the distributive
bisemilattice $\Sb \times \Lb$, labelled as in Figure~\ref{fig: sxl},
with all four elements named via nullary operations. Then\/ $\Lb_\A \cong \mathbf N_5$.

\begin{figure}[tb]
\begin{tikzpicture}[node distance=1cm]
\begin{scope}
\node[element,label=below: {\small $0 = (x,y)$}] (0) {};
\node[element,above left of=0,label=left: {\small $a = (y,y)$}] (a) {};
\node[element,above right of=0,label=right: {\small $b = (x,z)$}] (b) {};
\node[element,above right of=a,label=above: {\small $1 = (y,z)$}] (1) {};
\draw (1) -- (a) -- (0);
\draw (1) -- (b) -- (0);
\node[below of=0] (m) {$\wedge$};
\end{scope}
\begin{scope}[xshift=6cm]
\node[element,label=below: {\small $b = (x,z)$}] (b) {};
\node[element,above left of=b,label=left: {\small $0 = (x,y)$}] (0) {};
\node[element,above right of=b,label=right: {\small $1 = (y,z)$}] (1) {};
\node[element,above right of=0,label=above: {\small $a = (y,y)$}] (a) {};
\draw (1) -- (a) -- (0);
\draw (1) -- (b) -- (0);
\node[below of=b] (m) {$\sqcap$};
\end{scope}
\end{tikzpicture}
\caption{The distributive bisemilattice $\Sb \times \Lb$.}\label{fig: sxl}
\end{figure}
\end{example}

\begin{proof}
We will apply Lemma~\ref{lem: new}~(iii)~$\Rightarrow$~(i).
The congruences $\alpha$, $\beta$ and $\gamma$ on~$\A$ are shown in Figure~\ref{fig: N5Con}.
Note that the underlying bisemilattices of $\A/\alpha$, $\A/\beta$ and $\A/\gamma$
are isomorphic to~$\Sb$, $\Lb$ and~$\D$, respectively; see Figure~\ref{fig: si}.
We shall see that these are the only subdirectly irreducible algebras in $\Var(\A)$.

\begin{figure}[tb]
\begin{tikzpicture}[node distance=0.875cm]
\begin{scope}
\node[element,label=below: {\small 0}] (0) {};
\node[element,above left of=0,label=left: {\small $a$}] (a) {};
\node[element,above right of=0,label=right: {\small $b$}] (b) {};
\node[element,above right of=a,label=above: {\small 1}] (1) {};
\draw (1) -- (a) -- (0);
\draw (1) -- (b) -- (0);
\node[below of=a,yshift=-0.5cm] (c) {$\alpha$};
\begin{pgfonlayer}{background}
  \node[potato,fit=(a)(1),rotate=45,inner xsep=5pt,inner ysep=0pt,yshift=2pt] {};
  \node[potato,fit=(0)(b),rotate=45,inner xsep=5pt,inner ysep=0pt,yshift=-2pt] {};
\end{pgfonlayer}
\end{scope}
\begin{scope}[xshift=3.25cm]
\node[element,label=below: {\small 0}] (0) {};
\node[element,above left of=0,label=left: {\small $a$}] (a) {};
\node[element,above right of=0,label=right: {\small $b$}] (b) {};
\node[element,above right of=a,label=above: {\small 1}] (1) {};
\draw (1) -- (a) -- (0);
\draw (1) -- (b) -- (0);
\node[below of=a,yshift=-0.5cm] (c) {$\beta$};
\begin{pgfonlayer}{background}
  \node[potato,fit=(0)(a),rotate=-45,inner xsep=5pt,inner ysep=0pt,yshift=-2pt] {};
  \node[potato,fit=(b)(1),rotate=-45,inner xsep=5pt,inner ysep=0pt,yshift=2pt] {};
\end{pgfonlayer}
\end{scope}
\begin{scope}[xshift=6.5cm]
\node[element,label=below: {\small 0}] (0) {};
\node[element,above left of=0,label=left: {\small $a$}] (a) {};
\node[element,above right of=0,label=right: {\small $b$}] (b) {};
\node[element,above right of=a,label=above: {\small 1}] (1) {};
\draw (1) -- (a) -- (0);
\draw (1) -- (b) -- (0);
\node[below of=a,yshift=-0.5cm] (c) {$\gamma$};
\begin{pgfonlayer}{background}
  \node[potato,fit=(a),inner xsep=5pt,inner ysep=10pt,xshift=-2pt] {};
  \node[potato,fit=(1),inner xsep=5pt,inner ysep=10pt,yshift=2pt] {};
  \node[potato,fit=(0)(b),rotate=45,inner xsep=5pt,inner ysep=0pt,yshift=-2pt] {};
\end{pgfonlayer}
\end{scope}
\begin{scope}[xshift=10.25cm]
\node[element,label=below: {\small $0_\A$}] (0) {};
\node[element,above left of=0,label=left: {\small $\gamma$}] (c) {};
\node[element,above right of=0,label=right: {\small $\beta$},yshift=0.5cm] (b) {};
\node[element,above of=c,label=left: {\small $\alpha$}] (a) {};
\node[element,above right of=a,label=above: {\small $1_\A$}] (1) {};
\draw (1) -- (a) -- (c) -- (0);
\draw (1) -- (b) -- (0);
\node[below of=c,xshift=-0.5cm,yshift=-0.5cm] (c) {$\Con(\A)$};
\end{scope}
\end{tikzpicture}
\caption{The congruences on $\A$ (shown relative to $\wedge$).}\label{fig: N5Con}
\end{figure}

\begin{figure}[tb]
\begin{tikzpicture}[node distance=0.75cm]
\begin{scope}
\begin{scope}
  \node[element,label=right: {\small $0b$}] (i) {};
  \node[element,above of=i,label=right: {\small $a1$}] (0) {};
  \draw (0) -- (i);
  \node[below of=i] (m) {$\wedge$};
  \node[left of=i,xshift=-0.125cm] (l) {$\A/\alpha$};
\end{scope}
\begin{scope}[xshift=1.25cm]
  \node[element,label=right: {\small $0b$}] (i) {};
  \node[element,above of=i,label=right: {\small $a1$}] (0) {};
  \draw (0) -- (i);
  \node[below of=i] (m) {$\sqcap$};
\end{scope}
\end{scope}
\begin{scope}[xshift=4.5cm]
\begin{scope}
  \node[element,label=right: {\small $0a$}] (0) {};
  \node[element,above of=0,label=right: {\small $b1$}] (1) {};
  \draw (1) -- (0);
  \node[below of=0] (m) {$\wedge$};
  \node[left of=0,xshift=-0.125cm] (l) {$\A/\beta$};
\end{scope}
\begin{scope}[xshift=1.25cm]
  \node[element,label=right: {\small $b1$}] (1) {};
  \node[element,above of=1,label=right: {\small $0a$}] (0) {};
  \draw (0) -- (1);
  \node[below of=1] (m) {$\sqcap$};
\end{scope}
\end{scope}
\begin{scope}[xshift=9cm]
\begin{scope}
  \node[element,label=right: {\small $0b$}] (i) {};
  \node[element,above of=i,label=right: {\small $a$}] (0) {};
  \node[element,above of=0,label=right: {\small 1}] (1) {};
  \draw (1) -- (0) -- (i);
  \node[below of=i] (m) {$\wedge$};
  \node[left of=i,xshift=-0.125cm] (l) {$\A/\gamma$};
\end{scope}
\begin{scope}[xshift=1.25cm]
  \node[element,label=right: {\small $0b$}] (i) {};
  \node[element,above of=i,label=right: {\small 1}] (1) {};
  \node[element,above of=1,label=right: {\small $a$}] (0) {};
  \draw (0) -- (1) -- (i);
  \node[below of=i] (m) {$\sqcap$};
\end{scope}
\end{scope}
\end{tikzpicture}
\caption{The subdirectly irreducible algebras in $\Var(\A)$.}\label{fig: si}
\end{figure}

As nullary operations play no role in determining the congruences on an algebra,
the underlying bisemilattice of a subdirectly irreducible algebra in $\Var(\A)$
must be a subdirectly irreducible distributive bisemilattice and hence
must be isomorphic to~$\Sb$, $\Lb$ or~$\D$, by Kalman's result~\cite{Ksi}.
But the values of the four nullary operations on an algebra in $\Var(\A)$ are determined by
the two semilattice operations:
\begin{itemize}
\item $1$ and $0$ are the top and bottom for~$\wedge$, respectively;
\item $a$ and $b$ are the top and bottom for~$\sqcap$, respectively.
\end{itemize}
Hence, the values of the nullary operations $0$, $a$, $b$, $1$ shown in Figure~\ref{fig: si}
are the only possible labellings of the bisemilattices $\Sb$, $\Lb$ and $\D$ that produce an
algebra in $\Var(\A)$. So these are the only subdirectly irreducible algebras
in~$\Var(\A)$. Each of these algebras is zero-generated, so it follows
that condition~\ref{lem: new}(iii)(a) holds.

Since $\gamma \subseteq \alpha$ and $\alpha \cap \beta = 0_\A$, we can establish
condition~\ref{lem: new}(iii)(b) by showing $(\A/\alpha) \times (\A/\beta) \to \A$.
But $\alpha \cap \beta = 0_\A$ and $\alpha \cdot \beta = 1_\A$ imply
$\A \cong (\A/\alpha) \times (\A/\beta)$.
Hence, by Lemma~\ref{lem: new}, we have $\Lb_\A\cong \Con(\A) \cong \mathbf N_5$.
\end{proof}

We have established that all lattices of size up to~5 arise as the homomorphism lattice
induced by a finite algebra.

\subsection*{Acknowledgement}
We are indebted to Keith Kearnes for suggestions that led to improvements in
Section~\ref{sec: extra}. In particular, Example~\ref{ex: subgroups} is due to Keith.


\end{document}